\documentclass[11pt]{amsart}
\usepackage{mathrsfs,amsthm,amsmath,amssymb,bbm,pdfsync,graphicx,fullpage,esint}
\usepackage{prettyref,hyperref,varioref}

\usepackage{wasysym}

\theoremstyle{plain}
\newtheorem{thm}{\protect\theoremname}[section]
\newtheorem*{thm*}{\protect\theoremname}

\newtheorem{lem}[thm]{\protect\lemmaname}
\newtheorem*{lem*}{\protect\lemmaname}

\newtheorem{prop}[thm]{\protect\propositionname}
\newtheorem*{prop*}{\protect\propositionname}

\newtheorem{cor}[thm]{\protect\corollaryname}
\newtheorem*{cor*}{\protect\corollaryname}

\newtheorem*{fact*}{\protect\factname}

\theoremstyle{definition}

\newtheorem*{defn*}{\protect\definitionname}

\newtheorem{quest}[thm]{\protect\questionname}
\theoremstyle{remark}
\newtheorem{rem}[thm]{\protect\remarkname}

\numberwithin{equation}{subsection}






\newcommand{\C}{\mathbb{C}}



\newcommand{\indicator}[1]{\mathbbm{1}_{#1}}



\newcommand{\at}[1]{{#1}\rvert}

\providecommand{\corollaryname}{Corollary}
\providecommand{\definitionname}{Definition}
\providecommand{\factname}{Fact}
\providecommand{\lemmaname}{Lemma}
\providecommand{\propositionname}{Proposition}
\providecommand{\theoremname}{Theorem}
\providecommand{\remarkname}{Remark}
\providecommand{\conjecturename}{Conjecture}
\providecommand{\examplename}{Example}
\providecommand{\claimname}{Claim}
\providecommand{\problemname}{Problem}
\providecommand{\questionname}{Question}

\newrefformat{prop}{Proposition \ref{#1}}
\newrefformat{subsub}{Section \ref{#1}}
\newrefformat{fact}{Fact \ref{#1}}
\newrefformat{cor}{Corollary \ref{#1}}
\newrefformat{exam}{Example \ref{#1}}
\newrefformat{rem}{Remark \ref{#1}}
\newrefformat{defn}{Definition \ref{#1}}
\newrefformat{conj}{Conjecture \ref{#1}}
\newrefformat{quest}{Question \ref{#1}}
\newrefformat{sub}{Section \ref{#1}}

\begin{document}

\title{Bounding the Complexity of Replica Symmetry Breaking for Spherical Spin Glasses}

\author{Aukosh Jagannath}
\address[Aukosh Jagannath]{Courant Institute of Mathematical Sciences, 251 Mercer St.\ NY, NY, USA, 10012}
\email{aukosh@cims.nyu.edu}

\author{Ian Tobasco}
\address[Ian Tobasco]{Courant Institute of Mathematical Sciences, 251 Mercer St.\ NY, NY, USA, 10012}
\email{tobasco@cims.nyu.edu}

\begin{abstract}
In this paper, we study the Crisanti-Sommers variational problem, which
is a variational formula for the free energy of spherical mixed $p$-spin
glasses. We begin by computing the dual of this problem using a min-max
argument. We find that the dual is a 1-D problem of obstacle type,
where the obstacle is related to the covariance structure of the underlying
process. This approach yields an alternative way to understand Replica
Symmetry Breaking at the level of the variational problem through
topological properties of the coincidence set of the optimal dual
variable. Using this duality, we give an
algorithm to reduce this \emph{a priori} infinite dimensional variational problem
to a finite dimensional one, thereby confining all possible forms of
Replica Symmetry Breaking in these models to a finite parameter family.
These results complement the authors' related results for the low
temperature $\Gamma$-limit of this variational problem. We briefly
discuss the analysis of the Replica Symmetric phase using this approach.
\end{abstract}

\date{July 7, 2016}

\maketitle

\section{Introduction}

The analysis of variational formulas for free energies plays a central
role in statistical physics. The phenomenological properties of their
optimizers are of particular interest as they play the role of an
order parameter, encoding physical properties of the system. In the
study of mean field spin glasses, these formulas traditionally come
in the form of a strictly convex minimization problem over the space
of probability measures. Put briefly, the minimizers of these problems
are related to the laws of relative positions of configurations drawn
from a corresponding Gibbs measure. Changes in the topology of their
support are related to the presence and nature of a spin glass phase.
For more on this see \cite{JagTobPD15,MPV87,PanchSKBook}.

Though there has been major progress in the mathematical study of
variational problems arising from mean field spin glasses in recent
years \cite{AuffChen13,AuffChen14,JagTobLT16,JagTobPD15}, many interesting
questions remain open. In this paper, we present an alternative perspective
on a class of these problems, specifically those coming from the spherical
mixed $p$-spin glass models. In this class, the variational problem
involves the \emph{Crisanti-Sommers functional} which we define presently.

Let $\xi(t)=\beta^{2}\sum_{p\geq2}\beta_{p}^{2}t^{p}$ satisfy $\xi(1+\epsilon)<\infty$
and let $h$ be a non-negative real number. Call $\xi_{0}=\beta^{-2}\xi$.
The parameters $\xi_{0}$, $\beta$, and $h$ are called the \emph{model},
\emph{inverse temperature, }and \emph{external field} respectively.
For $\mu\in\Pr([0,1])$, let $\phi_{\mu}(t)=\int_{t}^{1}\mu([0,s])ds$.
The Crisanti-Sommers\emph{ }functional is then defined as

\[
P_{\xi,h}(\mu)=\frac{1}{2}\left(\int_{0}^{1}\xi''\phi_{\mu}\,ds+\int_{0}^{1}\left(\frac{1}{\phi_{\mu}}-\frac{1}{1-s}\right)\,ds+h^{2}\phi_{\mu}(0)\right).
\]
As $\phi_{\mu}(s)\leq1-s$, the second term is non-negative so that
this functional is well defined. Observe that the functional is strictly
convex by the strict convexity of the function $x\mapsto\frac{1}{x}$,
and is lower semi-continuous in the weak-$*$ topology. (For experts:
this is a lower semi-continuous extension of the functional originally
defined by Crisanti and Sommers \cite{CriSom92}. See \cite[Section 6.1]{JagTobLT16}
for more details.) 

The Crisanti-Sommers variational problem is given by

\begin{equation}
F(\xi,h)=\min_{\mu\in\Pr([0,1])}P_{\xi,h}(\mu).\label{eq:Crisanti-Sommers-var-prob-def}
\end{equation}
This gives a variational formula for the free energy in spherical
mixed $p$-spin glasses. In particular, for $\sigma\in S^{N-1}(\sqrt{N})$,
if we define the Hamiltonian

\begin{equation}
H_{N}(\sigma)=\beta\sum_{p=2}^{\infty}\frac{\beta_{p}}{N^{\frac{p-1}{2}}}\sum_{i_{1},\ldots,i_{p}=1}^{N}g_{i_{1},\ldots,i_{p}}\sigma_{i_{1}}\cdots\sigma_{i_{p}}+h\sum_{i=1}^{N}\sigma_{i},\label{eq:spherical-model-ham}
\end{equation}
where $g_{i_{1}\ldots i_{p}}$ are i.i.d.\ standard normal random
variables, then 
\[
F(\xi,h)=\lim_{N\to\infty}\frac{1}{N}\log\int_{S^{N-1}(\sqrt{N})}e^{H_{N}(\sigma)}dvol_{N-1}\quad a.s.
\]
where $dvol_{N-1}$ is the normalized volume form on $S^{N-1}(\sqrt{N}$).
This was first established by Crisanti and Sommers \cite{CriSom92}
non-rigorously using the Replica method. It was proved by Talagrand
\cite{TalSphPF06} in the case that $\xi$ is even and Chen \cite{ChenSph13}
for general $\xi$. 

Of particular interest in the analysis of the variational problem
\prettyref{eq:Crisanti-Sommers-var-prob-def} is the development of
its phase diagram. In particular, one is interested in characterizing
the regions in $(\xi,h)$-space where the minimizer, $\mu$, has
\begin{itemize}
\item 1 atom, called Replica Symmetry (RS)
\item $k+1$ atoms, called $k$-step Replica Symmetry Breaking (kRSB)
\item an absolutely continuous part, called full Replica Symmetry Breaking
(fRSB).
\end{itemize}
One is also interested in the nature of the fRSB phase, e.g., whether
there are many absolutely continuous components in the support of
$\mu$, and whether there are atoms in between. 

The notion of Replica Symmetry Breaking (RSB) was introduced by Parisi
in his groundbreaking papers \cite{Par79}. The physical interpretation
and mathematical foundations of RSB has been a subject of great research
for several decades in both communities. 
In many models, and in particular \eqref{eq:spherical-model-ham}, there is a rigorous mathematical interpretation
of RSB that depends on the form of the optimizer of \prettyref{eq:Crisanti-Sommers-var-prob-def}.  
For a physically minded discussion of this see \cite{MPV87}, and
for a mathematically minded discussion of this see \cite{PanchSKBook}. 

With this picture in mind, it then becomes of paramount interest to
determine which models have what forms of RSB. Until now, this question
has been studied by either: proposing an ansatz --- perhaps through some deep
physical insight --- and analytically testing its validity; numerically
optimizing the problem; or performing some combination of the two.
Ideally, however, one would be able to simply look at an Hamiltonian
and provide a simple \emph{a priori} bound on the nature and complexity
of RSB that can arise (perhaps after a back of the envelope calculation).

In this paper, we provide such a technique. We give a simple test
which, given the form of the Hamiltonian, reduces the nature and form
of the RSB to a simple finite parameter family. 
This family is given explicitly in terms of the model in \prettyref{cor:RSBreduction}.

We now turn to a more precise discussion of the methods introduced
in this paper and their relation to the previous literature. Previously,
the mathematical analysis of the Crisanti-Sommers problem focused on the study of the variational inequality 
\begin{equation}
\left(\eta(s)-\xi(s),\nu-\mu\right)\geq0\quad\forall\,\nu\in\Pr([0,1])\label{eq:var-ineq}
\end{equation}
which arises from its first order optimality conditions. Here, $\eta$
depends on the measure $\mu$. The focus of this analysis is generally
to test and rule out ansatzes. As $P$ is strictly convex, one can
prove that an ansatz is optimal by checking that \eqref{eq:var-ineq}
is satisfied. The non-locality of the map $\mu\mapsto\eta_{\mu}$
is a major technical difficulty in the study of RSB: the more complex
the ansatz, the more unwieldy this approach becomes.

In this paper, we take a different approach. It is a classical observation
that variational inequalities such as \eqref{eq:var-ineq} can be
encoded in an \emph{obstacle problem}, that is, a problem of the form
\[
\max_{\eta\geq\psi}D(\eta).
\]
Here, $\psi$ is called the obstacle and $D$ is a local functional
(for this in our setting, see \prettyref{sub:Duality-intro}). This
obstacle problem will be dual to \eqref{eq:Crisanti-Sommers-var-prob-def},
which we call the \emph{primal problem}, and the dual variable will
be $\eta$. The relation between the optimizers of the primal and
dual problems is given by the case of equality in a certain Legendre
transform. 

Two new approaches emerge by allowing $\eta$ to vary. Firstly,
one can analyze the dual problem in and of itself. Alternatively,
one can simultaneously analyze pairs $(\mu,\eta)$ searching for a
pair that satisfies a saddle-type condition. Furthermore, this new perspective
permits methods and techniques which are natural for the dual
problem and have no apparent analogues for the primal one. (See, e.g.,
the proof of \prettyref{thm:Rule-of-Signs} and \prettyref{rem:We-are-awesome}.)

As the dual is an obstacle problem, its first order optimality conditions
will generate a measure that is supported on the \emph{coincidence
set,} $\{s:\eta=\psi\}$. Through the relation between the primal
and dual variables, this measure will solve the primal problem \eqref{eq:Crisanti-Sommers-var-prob-def}. 

This duality yields an alternative perspective on the study of Replica
Symmetry Breaking. The nature and form of the RSB
in a given system can be analyzed by studying the fine topological
properties of the coincidence set of the dual, thereby connecting
a question of pure spin glass theory to a question naturally arising
in the study of free boundary problems in classical physics. Furthermore,
this gives a natural interpretation for $\xi$ --- which describes the
covariance structure of $H_{N}$ --- at the level of the dual: it is
the obstacle, $\psi=\xi$. 

This duality approach was introduced by the authors in \cite{JagTobLT16}
in the low temperature (large $\beta$) limit. There, after developing
the\textbf{ }low temperature \textbf{$\Gamma-$}limit of the Crisanti-Sommers
problem, we obtained its dual problem. By analyzing the two problems
simultaneously, we were able to unify independent conjectures in the
mathematics and physics literatures regarding 1RSB at zero temperature.
Furthermore, we obtained a simple functional form for the optimizer
depending on only finitely many parameters, similar to the one given
below in \prettyref{cor:RSBreduction}. 

In this paper, we find that the duality approach can in fact
be applied at any temperature and external magnetic field, and for
all models. We find that the dual of the Crisanti-Sommers variational
problem is also a maximization problem of obstacle type, though of
a more complicated form than that in \cite{JagTobLT16}. As our
main application of this method, we present the aforementioned reduction
of RSB to a simple finite parameter family; we also discuss some simple
observations regarding RS in these systems.

\subsection{Duality\label{sub:Duality-intro}}

The first step in our approach to the Crisanti-Sommers variational
problem \eqref{eq:Crisanti-Sommers-var-prob-def} is to obtain its
dual. To state the dual problem we need the following notation. Define
the vector space 
\[
X=\{\eta:\eta\in C^{1}([0,1)),\eta'\in Lip_{\text{loc}}([0,1))\}.
\]
Here we allow $\eta$ to blow up at the point $1$. Observe that for
$\eta\in X$, the second distributional derivative, $\eta''$, can
be identified with a measurable function on $[0,1]$ which is a.e.\
unique, and which is essentially bounded on subintervals of the form
$[0,1-\delta]$ where $\delta>0$. 

Let 
\[
\Lambda_{\xi,h}=\left\{ \eta\in X:\eta\geq\xi,\eta'(0)=-h^{2},\eta''\geq0\text{ Leb.-a.e.}\right\} 
\]
and define the functional $D_{\xi,h}:\Lambda_{\xi,h}\to[-\infty,\infty)$
by 
\[
D_{\xi,h}(\eta)=\frac{1}{2}\left(\int_{0}^{1}2\sqrt{\eta''(s)}-\eta''(s)\cdot(1-s)-\frac{1}{1-s}\, ds-\eta(0)+h^{2}+\xi(1)\right).
\]
The integrand is measurable and, by the arithmetic-geometric inequality,
it is non-positive so that its integral is well-defined. 

Finally, we define the\emph{ coincidence set},\emph{ 
\[
\left\{ s\in[0,1):\eta(s)=\xi(s)\right\} .
\]
}
\begin{thm}
(Duality) \label{thm:duality}We have that 
\[
\min_{\mu\in\Pr([0,1])}P_{\xi,h}(\mu)=\max_{\eta\in\Lambda_{\xi,h}}D_{\xi,h}(\eta).
\]
Furthermore, there is a unique optimal pair $(\mu,\eta)\in\Pr([0,1])\times\Lambda_{\xi,h}$,
which is characterized by the relations 
\begin{equation}
\begin{cases}
\mu\left(\{\eta=\xi\}\right)=1\\
\eta''\phi_{\mu}^{2}=1 & Leb-a.e.
\end{cases}\label{eq:opt-conds}
\end{equation}

\end{thm}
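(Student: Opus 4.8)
The plan is to prove \prettyref{thm:duality} through a single \emph{duality-gap identity} which simultaneously delivers weak duality, strong duality, and the characterization \eqref{eq:opt-conds}. The first step is to show that for every $\mu\in\Pr([0,1])$ and every $\eta\in\Lambda_{\xi,h}$ one has, as an identity in $[0,+\infty]$,
\[
2\big(P_{\xi,h}(\mu)-D_{\xi,h}(\eta)\big)=\int_0^1\Big(\tfrac1{\sqrt{\phi_\mu}}-\sqrt{\eta''\phi_\mu}\Big)^2\,ds+\int_{[0,1]}(\eta-\xi)\,d\mu,
\]
with both terms on the right nonnegative. The square comes from the pointwise arithmetic--geometric inequality $\tfrac1{\phi_\mu}+\eta''\phi_\mu\ge 2\sqrt{\eta''}$; the term $\int(\eta-\xi)\,d\mu$ emerges after the integration-by-parts identities $\int_0^1\xi''\phi_\mu\,ds=\xi(1)-\int\xi\,d\mu$ and the analogous formula for $\int_0^1\eta''\phi_\mu\,ds$, both of which use $\xi'(0)=0$ together with the boundary condition $\eta'(0)=-h^2$ encoded in $\Lambda_{\xi,h}$. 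This manipulation must be carried out carefully near $s=1$, where $\eta''$, $1/\phi_\mu$ and $1/(1-s)$ may each fail to be integrable; I would work on $[0,1-\delta]$ and let $\delta\downarrow0$, verifying that the boundary contributions cancel. This already gives the inequality $\min_\mu P_{\xi,h}\ge\sup_\eta D_{\xi,h}$.

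Next I would produce the optimal pair. Since $P_{\xi,h}$ is weak-$*$ lower semicontinuous, $\Pr([0,1])$ is weak-$*$ compact, and $P_{\xi,h}(\delta_0)=\tfrac12(\xi(1)+h^2)<\infty$, a minimizer $\mu^*$ of \eqref{eq:Crisanti-Sommers-var-prob-def} exists, and it is unique by strict convexity. From $P_{\xi,h}(\mu^*)<\infty$ one extracts the regularity of $\mu^*$ at the endpoint: it carries no mass at $1$, $(1-s)/\phi_{\mu^*}(s)\to1$ as $s\to1$, and $\int_0^1\big((1-s)-\phi_{\mu^*}(s)\big)(1-s)^{-2}\,ds<\infty$. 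I would then \emph{define} $\eta^*$ by $\eta^*{}''=1/\phi_{\mu^*}^2$ on $[0,1)$ (continuous, nonnegative, locally bounded), $\eta^*{}'(0)=-h^2$, and with the additive constant chosen minimal subject to $\eta^*\ge\xi$, namely $\eta^*(0)=\sup_{s\in[0,1)}\big(\xi(s)-\eta^*_0(s)\big)$ where $\eta^*_0$ carries the same derivative data with $\eta^*_0(0)=0$; the supremum is finite since $\eta^*_0(s)\to+\infty$ while $\xi$ stays bounded near $1$. Then $\eta^*\in X$, and $\eta^*\ge\xi$ on $[0,1)$ with $\min_{[0,1)}(\eta^*-\xi)=0$, so $\eta^*\in\Lambda_{\xi,h}$; moreover $D_{\xi,h}(\eta^*)>-\infty$ because $\eta^*{}''\phi_{\mu^*}^2=1$ collapses the integrand of $D_{\xi,h}$ to $-\big((1-s)-\phi_{\mu^*}\big)^2\big((1-s)\phi_{\mu^*}^2\big)^{-1}$, which is dominated by the $L^1$ function above.

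The heart of the argument is then to show $\mu^*(\{\eta^*=\xi\})=1$. Using the first-order optimality \eqref{eq:var-ineq}, the one-sided derivative of $P_{\xi,h}$ at $\mu^*$ in the direction $\delta_q$ equals $\tfrac12\Psi(q)$, where
\[
\Psi(q)=\int_0^1(\xi''-\eta^*{}'')(\phi_{\delta_q}-\phi_{\mu^*})\,ds+h^2(\phi_{\delta_q}(0)-\phi_{\mu^*}(0)).
\]
Although $\int\xi''\phi$ and $\int\phi/\phi_{\mu^*}^2$ diverge separately near $s=1$, the combination $\Psi(q)$ is finite for each $q<1$ by the endpoint estimate. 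Optimality gives $\Psi\ge0$ with $\Psi=0$ $\mu^*$-a.e., so $\mu^*$ is supported where $\Psi$ attains its minimum, and that minimum is $0$. Differentiating under the integral in $q$ (legitimate because, for $s$ near $1$, $\phi_{\delta_q}(s)=1-s$ is $q$-independent) yields the clean identity $\Psi'(q)=\big(\eta^*-\xi\big)'(q)$ on $[0,1)$, so $\Psi$ and $\eta^*-\xi$ differ by a constant; since both have minimum value $0$ and attain it, the constant is $0$, whence $\Psi\equiv\eta^*-\xi$ and $\{\Psi=0\}=\{\eta^*=\xi\}$.

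Now the two nonnegative terms of the gap identity both vanish for $(\mu^*,\eta^*)$: $\eta^*{}''\phi_{\mu^*}^2=1$ kills the square and $\mu^*(\{\eta^*=\xi\})=1$ kills $\int(\eta^*-\xi)\,d\mu^*$. Hence $P_{\xi,h}(\mu^*)=D_{\xi,h}(\eta^*)$, which with the first paragraph gives $\min_\mu P_{\xi,h}=P_{\xi,h}(\mu^*)=D_{\xi,h}(\eta^*)=\max_\eta D_{\xi,h}$ and exhibits $(\mu^*,\eta^*)$ as an optimal pair satisfying \eqref{eq:opt-conds}. For uniqueness, $\mu^*$ is unique by strict convexity; and if $(\mu,\eta)$ is any optimal pair, then $P_{\xi,h}(\mu)=D_{\xi,h}(\eta)=\min P_{\xi,h}$ forces $\mu=\mu^*$ and, through the gap identity, $\eta''\phi_{\mu^*}^2=1$ a.e.\ and $\mu^*(\{\eta=\xi\})=1$; the first determines $\eta$ up to an affine function, $\eta'(0)=-h^2$ removes the slope, and the leftover constant is pinned by the requirement that $\eta\ge\xi$ touch $\xi$ on $\operatorname{supp}\mu^*$ (raising the constant contradicts $\mu^*(\{\eta=\xi\})=1$, lowering it breaks $\eta\ge\xi$), so $\eta=\eta^*$. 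I expect the genuine difficulty to be concentrated at the endpoint $s=1$: the cancellation of boundary terms in the gap identity, the regularity of $\mu^*$ (no atom at $1$ and the $L^1$ bound), the verification that $\eta^*\in X$ with $D_{\xi,h}(\eta^*)>-\infty$, and the care needed to treat the first-order condition through the combined functional $\Psi$ rather than its individually divergent pieces. (The minimax argument mentioned in the introduction is an alternative path to the value identity, but I find this direct construction cleaner, and it produces \eqref{eq:opt-conds} and uniqueness at the same time.)
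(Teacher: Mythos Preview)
Your proposal is correct and rests on the same three ingredients as the paper's proof: the pointwise Legendre/AM--GM inequality $\tfrac{1}{\phi_\mu}+\eta''\phi_\mu\ge 2\sqrt{\eta''}$ with equality iff $\eta''\phi_\mu^2=1$; an integration by parts linking $\int\eta''\phi_\mu$ to $\int\eta\,d\mu$ (the paper isolates this as \prettyref{lem:IBP}); and the first-order optimality of $\mu^*$ to force $\mu^*$ onto $\{\eta^*=\xi\}$. Your $\Psi'(q)=(\eta^*-\xi)'(q)$ computation is exactly the paper's integration-by-parts step \eqref{eq:eq_b}--\eqref{eq:eq_d} read in the other direction, and your uniqueness argument for $\eta$ matches the paper's (same second derivative from strict concavity of $\sqrt{\cdot}$, same slope from $\eta'(0)=-h^2$, constant fixed by contact).

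The two organizational differences are worth noting. First, the paper passes through an auxiliary functional $\tilde D$ \emph{without} the constraint $\eta\ge\xi$, establishes $\min_{\mu\in Q}\tilde P=\max\tilde D$ by a min--max argument, and only at the end shifts $\eta$ to land in $\Lambda$; you work directly with $D$ and package everything into a single gap identity. Second, and more substantively, the paper handles the endpoint $s=1$ by first proving (\prettyref{lem:mu-in-Q}) that the optimal $\mu$ lies in $Q=\{\sup\operatorname{supp}\mu<1\}$, after which every integral is over a compact subinterval and the integration by parts is routine. You instead extract from $P(\mu^*)<\infty$ only the weaker information $\mu^*(\{1\})=0$, $(1-s)/\phi_{\mu^*}\to1$, and $\int((1-s)-\phi_{\mu^*})(1-s)^{-2}\,ds<\infty$, and push the limiting argument on $[0,1-\delta]$ through by hand. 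This is viable (your key observation that $(1-s)/\phi_{\mu^*}$ is bounded makes $\int\eta^*{}''((1-s)-\phi_{\mu^*})\,ds$ finite and $\Psi$ well-defined), but note that your gap identity for \emph{arbitrary} $(\mu,\eta)\in\Pr\times\Lambda$ does require care: for general $\eta\in\Lambda$ the boundary term $\eta'(1-\delta)\cdot((1-\delta)-\phi_\mu(1-\delta))$ need not vanish. The paper's route via $Q$ avoids this entirely; your route works because you only need the identity at the specific pair $(\mu^*,\eta^*)$, and the inequality $P\ge D$ (weak duality) for general pairs follows from the nonnegativity of the two pieces without needing equality.
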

One can consider the optimality conditions \prettyref{eq:opt-conds}
in various ways. For example, one can take $\mu$ as the variable
to be optimized while constraining $\eta$ to be given by 
\[
\eta_{\mu}=c-h^{2}t+\int_{0}^{t}\int_{0}^{s}\frac{1}{\phi_{\mu}^{2}}\, d\tau ds,
\]
where $c$ is the number such that $\inf\{\eta_{\mu}-\xi\}=0$.
Then, the optimal $\mu$ is the probability measure which gives full
mass to the coincidence set. This is equivalent to the primal approach
to \prettyref{eq:Crisanti-Sommers-var-prob-def}, i.e., the study
of the variational inequality \prettyref{eq:var-ineq}, and is also equivalent to Talagrand\textquoteright s
characterization of the optimal $\mu$ from \cite[Proposition 2.1]{TalSphPF06}.

A fundamentally different approach to \prettyref{eq:Crisanti-Sommers-var-prob-def}
is to analyze its dual:

\begin{equation}
F(\xi,h)=\max_{\eta\in\Lambda_{\xi,h}}D_{\xi,h}(\eta).\label{eq:dual-prob}
\end{equation}
The next result is regarding optimality and regularity for the dual
variable, $\eta$.
\begin{prop}
\label{prop:dual-regularity} Let $\eta$ be optimal for \prettyref{eq:dual-prob}.
Then,
\begin{itemize}
\item Optimality: 
\begin{equation}
\left(\frac{1}{\sqrt{\eta''}}\right)''=-\mu\label{eq:optimaletaeqn}
\end{equation}
in the sense of distributions where $\mu$ solves \eqref{eq:Crisanti-Sommers-var-prob-def}.
Moreover, the support of $\mu$ is contained in the coincidence set.
\item Regularity:

\begin{enumerate}
\item $\eta\in C^{2}([0,1))$, $\eta'''\in BV((0,1-\delta))\cap L^{\infty}((0,1-\delta))$
for all $\delta\in(0,1)$ 
\item $\frac{1}{(1-s)^{2}}\leq\eta''\leq\frac{C(\xi,h)}{(1-s)^{2}}$ for
some positive constant $C$
\item $\eta''=\frac{1}{(1-s)^{2}}$ sufficiently close to $1$
\end{enumerate}
\item Consistency: if $\eta(s)=\xi(s)$, then 
\begin{equation}
\eta'(s)=\xi'(s)\quad\text{and}\quad\eta''(s)\geq\xi''(s).\label{eq:consistency}
\end{equation}

\end{itemize}
\end{prop}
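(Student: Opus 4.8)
The plan is to read off all three assertions from the optimality characterization of Theorem~\ref{thm:duality} — namely that the optimal pair satisfies $\eta''=\phi_\mu^{-2}$ Leb.-a.e.\ and $\mu(\{\eta=\xi\})=1$ — together with one elementary pointwise fact: writing $I(a,s)=2\sqrt a-a(1-s)-\frac1{1-s}$, the arithmetic--geometric inequality gives $I(\cdot,s)\le0$ with equality precisely at $a=(1-s)^{-2}$. From $\phi_\mu(s)=\int_s^1\mu([0,t])\,dt$ one reads off that $\phi_\mu$ is non-negative, non-increasing, concave, vanishes at $1$, and (as already observed in the excerpt) satisfies $\phi_\mu\le1-s$; hence $\eta''=\phi_\mu^{-2}\ge(1-s)^{-2}$, which is the lower inequality in item~(2).

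I expect the behaviour near $1$ to be the crux, so I would treat it first. Let $M=\sup_{[0,1]}|\xi'|$, which is finite since the series defining $\xi$ has radius of convergence exceeding $1$, and set $q=1-(M+h^2+1)^{-1}$; I claim $\eta''=(1-s)^{-2}$ on $[q,1)$. If not, form the competitor $\tilde\eta\in X$ that equals $\eta$ on $[0,q]$ and solves $\tilde\eta''=(1-s)^{-2}$ on $[q,1)$ with $C^1$-matching at $q$. From $\eta''\ge(1-s)^{-2}$ we get $\eta'(q)\ge(1-q)^{-1}-1-h^2$, hence $\tilde\eta'(t)=\eta'(q)+(1-t)^{-1}-(1-q)^{-1}\ge M\ge\xi'(t)$ for $t\in[q,1)$, so $\tilde\eta-\xi$ is non-decreasing on $[q,1)$ and therefore non-negative there (using $\tilde\eta(q)=\eta(q)\ge\xi(q)$); since $\tilde\eta$ is allowed to blow up at $1$ and retains $\tilde\eta'(0)=-h^2$ and $\tilde\eta''\ge0$, we have $\tilde\eta\in\Lambda_{\xi,h}$. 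A short computation gives $D_{\xi,h}(\tilde\eta)-D_{\xi,h}(\eta)=-\tfrac12\int_q^1 I(\eta''(s),s)\,ds\ge0$, with equality iff $\eta''=(1-s)^{-2}$ a.e.\ on $[q,1)$; since $\eta$ is optimal, equality must hold, which is item~(3). Consequently $\phi_\mu=1-s$ on $[q,1)$, so $\mu([0,q])=1$ and $\mathrm{supp}\,\mu\subseteq[0,q]$; as $\{\eta=\xi\}$ is relatively closed in $[0,1)$ and carries full $\mu$-mass, $\{\eta=\xi\}\cap[0,q]$ is compact and contains $\mathrm{supp}\,\mu$, which is the last assertion of the Optimality bullet.

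The remaining items are local consequences of $\eta''=\phi_\mu^{-2}$. Concavity gives $\phi_\mu(s)\ge\phi_\mu(0)(1-s)$, and $\phi_\mu(0)$ is bounded below in terms of $(\xi,h)$ — were it too small, $\phi_\mu\le\phi_\mu(0)$ would force $\int_0^1(\phi_\mu^{-1}-(1-s)^{-1})\,ds$ to exceed $2F(\xi,h)$, which is at most $\xi(1)+h^2$ by comparison with the test measure $\delta_0$ — so $\eta''\le C(\xi,h)(1-s)^{-2}$ with $C(\xi,h)=\phi_\mu(0)^{-2}$, completing item~(2). Since $\phi_\mu$ is continuous and strictly positive on $[0,1)$, $\eta''=\phi_\mu^{-2}$ has a continuous representative, so $\eta\in C^2([0,1))$; then $\eta'''=-2\phi_\mu^{-3}\phi_\mu'$, where on $[0,q]$ the factor $\phi_\mu^{-3}$ is Lipschitz (as $\phi_\mu$ is Lipschitz and bounded below) and $\phi_\mu'$ is monotone, hence $BV$, while on $[q,1-\delta]$ we have $\eta'''=2(1-s)^{-3}$; this gives item~(1). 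For the optimality equation, $(\eta'')^{-1/2}=\phi_\mu$ and $\phi_\mu''=-\mu$ in the sense of distributions, since $\phi_\mu'=-\mu([0,\cdot])$ and the distributional derivative of a cumulative distribution function is the corresponding measure; this is \eqref{eq:optimaletaeqn}. For consistency, $\eta-\xi\in C^2$ is non-negative, so any coincidence point $s$ is a global minimum, which must lie in $(0,1)$ when $h>0$ — if $0$ were a coincidence point then $\eta(0)=0$ and $\eta(t)-\xi(t)=-h^2t+o(t)<0$ near $0$, contradicting $\eta\ge\xi$ — whence $(\eta-\xi)'(s)=0$ and $(\eta-\xi)''(s)\ge0$, i.e.\ \eqref{eq:consistency}; the endpoint case $h=0$, $s=0$ follows by the same Taylor argument with one-sided derivatives.

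The main obstacle is the step bounding the support away from $1$. That is where the finiteness of the obstacle's curvature ($\xi''$ bounded on $[0,1]$) is traded for a uniform gap between $\sup\mathrm{supp}\,\mu$ and $1$; the two delicate points are (i) exhibiting a competitor that does not disturb the $-\eta(0)$ term, handled by leaving $\eta$ untouched on $[0,q]$, and (ii) keeping that competitor above the obstacle despite its blow-up at $1$, handled by the explicit choice of $q$ together with the a priori bound $\eta''\ge(1-s)^{-2}$.
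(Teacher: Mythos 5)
Your proof is correct, and for most of the proposition it follows the same route as the paper: read off the identity $\eta''=\phi_\mu^{-2}$ and $\mu(\{\eta=\xi\})=1$ from Theorem~\ref{thm:duality}, deduce the optimality equation by differentiating, the lower bound in (2) from $\phi_\mu\le 1-s$, item (1) from continuity of $\phi_\mu$ away from $1$, and consistency from the interior-minimum conditions for $\eta-\xi$ (your extra care at $s=0$, ruled out when $h>0$ via $(\eta-\xi)'(0)=-h^2<0$, is a point the paper glosses over). The genuine divergence is item (3). The paper obtains $\eta''=(1-s)^{-2}$ near $1$ by invoking \prettyref{lem:mu-in-Q} — a primal first-variation argument showing $\sup\operatorname{supp}\mu<1$, which then gives $\phi_\mu=1-s$ near $1$. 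You instead argue purely on the dual side: with $q=1-(\xi'(1)+h^2+1)^{-1}$ you splice in the competitor $\tilde\eta$ with $\tilde\eta''=(1-s)^{-2}$ on $[q,1)$, verify $\tilde\eta\ge\xi$ there (the a priori bound $\eta''\ge(1-s)^{-2}$ guarantees $\tilde\eta'\ge\xi'(1)$ on $[q,1)$), and observe that $D(\tilde\eta)-D(\eta)=-\tfrac12\int_q^1 I(\eta'',s)\,ds\ge 0$ forces $\eta''=(1-s)^{-2}$ a.e.\ on $[q,1)$ by optimality. This is precisely the kind of dual-side variation the paper advertises (\prettyref{rem:We-are-awesome}), and it has the advantage of giving an explicit, quantitative radius $1-q$ and keeping the whole regularity argument on the dual side; the paper's approach, by contrast, leans on an already-proved and independently useful primal lemma. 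Your upper bound in item (2) via concavity ($\phi_\mu\ge\phi_\mu(0)(1-s)$ plus a lower bound on $\phi_\mu(0)$ from comparing to $\delta_0$) is also a small departure from the paper's softer ``continuity plus item (3)'' argument, and again yields an explicit constant. One last note: the paper's expression $\eta'''=-\mu([0,s])/\phi_\mu^2$ appears to contain a typographical slip; your $\eta'''=-2\phi_\mu^{-3}\phi_\mu'=2\mu([0,s])\phi_\mu^{-3}$ is the correct computation, and the $BV\cap L^\infty$ conclusion holds in either case.
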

\begin{proof}
We begin with the first order optimality conditions. Observe that
by the previous theorem, in particular by \prettyref{eq:opt-conds},
we have that
\[
\frac{1}{\sqrt{\eta''}}=\phi_{\mu}
\]
a.e.\ where $\mu$ solves \eqref{eq:Crisanti-Sommers-var-prob-def}.
Differentiating twice we see that \prettyref{eq:optimaletaeqn} holds.
The desired condition on the support of $\mu$ also follows from \prettyref{eq:opt-conds}. 

Now we prove regularity. We begin with claim (1). Observe that 
\[
\eta''=\frac{1}{\phi_{\mu}^{2}}
\]
a.e. Since $\phi_{\mu}$ is continuous and strictly positive away
from $1$, it follows that $\eta\in C^{2}([0,1))$. Differentiating
again, we find that
\[
\eta'''=-\frac{\mu([0,s])}{\phi_{\mu}^{2}}
\]
in the sense of distributions. Note that the righthand side belongs
to $L^{\infty}((0,1-\delta))\cap BV((0,1-\delta))$ for all $\delta\in(0,1)$.
Therefore, so does $\eta'''$. To prove the remaining claims, we will
use that the optimal $\mu$ is supported away from $1$ (cf.\ \prettyref{lem:mu-in-Q}).
This implies that $\phi_{\mu}=1-s$ on a neighborhood of $1$, so
that claim (3) holds. Since $\eta''$ is continuous on $[0,1)$, it
is bounded on each sub-interval of the form $[0,1-\delta]$ where
$\delta\in(0,1)$. Combining this with claim (3) we deduce the upper
bound part of claim (2); the lower bound part follows since in general
$\phi_{\mu}\leq1-s$. 

Finally, we prove the consistency conditions. By its definition, the
coincidence set consists of minimizers of $g=\eta-\xi$ on $[0,1)$.
Since $g\in C^{2}([0,1))$, it follows that $g'=0$ and $g''\geq0$
at such points. This proves \prettyref{eq:consistency}.
\end{proof}
This result shows how to recover the optimal measure $\mu$ from the optimal dual variable $\eta$.
Evidently, atoms in $\mu$ correspond to points in the coincidence set and intervals in the support
of $\mu$ correspond to intervals in the coincidence set. 
Thus, one can bound the complexity of RSB by controlling the topology of the coincidence set. 
We explore this idea in the next section.

\subsection{A finite dimensional reduction of RSB}

We now turn to the main result of this paper, namely the reduction of
all possible RSB in spherical mixed $p$-spin glasses to a finite
parameter family. To this end, define 
\[
\mathfrak{d}=\left(\frac{1}{\sqrt{\xi''}}\right)''.
\]
Then we have the following theorem which controls the topology of the coincidence set.
\begin{thm}
\label{thm:Rule-of-Signs}(Rule of Signs) Let $\eta$ be optimal for \prettyref{eq:dual-prob}
and let $0\leq a<b<1$. We have the following cases:
\begin{enumerate}
\item If $\mathfrak{d}>0$ on $(a,b),$ then $card(\{\eta=\xi\}\cap[a,b])\leq2$.
\item If $\mathfrak{d}\leq0$ on $[a,b]$ and $a,b\in\{\eta=\xi\}$, then
$[a,b]\subset\{\eta=\xi\}$.
\end{enumerate}
\end{thm}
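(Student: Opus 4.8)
The key object to track is the function $g = \eta - \xi \in C^2([0,1))$, which is non-negative by the constraint $\eta \geq \xi$, and whose zero set is exactly the coincidence set. The plan is to relate the sign of $\mathfrak{d}$ to a differential inequality or identity satisfied by $g$ on intervals where $g > 0$ (the non-coincidence set), where I expect $\eta$ to be "free," i.e., to satisfy an ODE coming from the dual optimality condition. Since $\mu$ is supported on $\{\eta = \xi\}$ by Proposition~\ref{prop:dual-regularity}, on any open interval $I$ contained in the non-coincidence set we have $\mu(I) = 0$, hence $\eta''' = -\mu([0,s])/\phi_\mu^2$ is, restricted to $I$, equal to $-c/\phi_\mu^2$ for the constant $c = \mu([0,\inf I])$; equivalently $(1/\sqrt{\eta''})'' = 0$ on $I$, so $1/\sqrt{\eta''}$ is affine there. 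Writing $u = 1/\sqrt{\eta''}$ and $v = 1/\sqrt{\xi''}$, we have $u$ affine on $I$ while $v'' = \mathfrak{d}$, and the consistency conditions \eqref{eq:consistency} tell us that at any endpoint of $I$ lying in the coincidence set, $\eta'' \geq \xi''$, i.e., $u \leq v$ there, with $g = \eta - \xi$ vanishing to second order.

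**Proof of case (1).** Suppose $\mathfrak{d} > 0$ on $(a,b)$ but, for contradiction, the coincidence set meets $[a,b]$ in at least three points $s_1 < s_2 < s_3$. Between consecutive coincidence points we are in the non-coincidence set (or on a sub-coincidence interval, but $g=0$ there forces $g$ to be identically zero on a neighborhood, reducing to the two-point case after relabeling), so on each of $(s_1,s_2)$ and $(s_2, s_3)$ the function $u = 1/\sqrt{\eta''}$ is affine. At $s_1, s_2, s_3$ we have $g = g' = 0$ and $g'' \geq 0$, hence $u \leq v$ at those three points, while $v'' = \mathfrak{d} > 0$ makes $v$ strictly convex on $(a,b)$. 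Now I examine $w = u - v$. On $(s_1,s_2)$, $w$ is a concave function (affine minus strictly convex) that is $\leq 0$ at both endpoints; similarly on $(s_2,s_3)$. The plan is to extract a contradiction from the fact that $g$ — which is essentially a double integral of $\eta'' - \xi''$, and $\eta'' - \xi''$ has the same sign as $v^2 - u^2$, hence as $-w$ since $u,v>0$ — must vanish to second order at three points while being non-negative, yet the convexity of $v$ versus piecewise-affinity of $u$ is too rigid to permit this. Concretely: the constant $c = \mu([0,s])$ is non-decreasing in $s$, so the slope of $u$ (which equals $-c$ up to the sign convention in $(1/\sqrt{\eta''})'' = -\mu$) is non-increasing as we pass from $(s_1,s_2)$ to $(s_2,s_3)$; combined with $w(s_i) \leq 0$ and the strict convexity of $v$, one forces $w < 0$ strictly in the interiors, hence $\eta'' > \xi''$ strictly, hence $g'' > 0$ on $(s_1,s_3) \setminus \{s_2\}$, which together with $g(s_1) = g'(s_1) = 0$ gives $g > 0$ on $(s_1, s_3]$ — contradicting $g(s_2) = 0$.

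**Proof of case (2).** Suppose $\mathfrak{d} \leq 0$ on $[a,b]$ and $a, b \in \{\eta = \xi\}$, but $g = \eta - \xi$ is not identically zero on $[a,b]$. Then there is a maximal open subinterval $(\alpha,\beta) \subseteq (a,b)$ on which $g > 0$, with $g(\alpha) = g(\beta) = 0$ and hence (by $g \geq 0$, $g \in C^2$) also $g'(\alpha) = g'(\beta) = 0$. On $(\alpha,\beta)$ we again have $\mu((\alpha,\beta)) = 0$, so $u = 1/\sqrt{\eta''}$ is affine there, while $v = 1/\sqrt{\xi''}$ satisfies $v'' = \mathfrak{d} \leq 0$, i.e., $v$ is concave on $(\alpha,\beta)$. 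By consistency at $\alpha$ and $\beta$ we get $\eta'' \geq \xi''$, i.e., $u \leq v$ at both endpoints. Then $w = u - v$ is convex (affine minus concave) on $(\alpha,\beta)$ and $\leq 0$ at both endpoints, so $w \leq 0$ throughout, giving $\eta'' \geq \xi''$, i.e., $g'' \geq 0$ on all of $(\alpha,\beta)$. But a convex function with $g(\alpha) = g(\beta) = 0$ must be $\leq 0$ on $(\alpha,\beta)$, contradicting $g > 0$ there. Hence no such subinterval exists and $[a,b] \subseteq \{\eta = \xi\}$.

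**Main obstacle.** The delicate point, and where I would spend the most care, is case (1): extracting a genuine \emph{strict} inequality to rule out three coincidence points rather than merely two. The monotonicity of $c = \mu([0,s])$ across the middle point $s_2$ is what breaks the symmetry and prevents $u$ from "tracking" the convex obstacle $v$ tangentially at three consecutive points; making this rigorous requires carefully handling the possibility that $\mu$ has an atom at $s_2$ (a jump in the slope of $u$) versus being continuous there, and ruling out the degenerate scenarios where one of the "intervals" between coincidence points is itself a sub-coincidence interval. I would also need the regularity from Proposition~\ref{prop:dual-regularity}(1) — that $\eta \in C^2$ and $\eta'''$ exists as a BV function — to justify that $g'' \geq 0$ integrates up correctly and that the endpoint second-order vanishing of $g$ is meaningful. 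The case-(2) argument is comparatively soft, relying only on the convexity/concavity dichotomy and the endpoint vanishing.
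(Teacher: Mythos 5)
Your argument for case (2) is correct and takes a genuinely different route from the paper. The paper's proof is a first-variation argument: perturb the optimal $\eta$ along the linear path towards the competitor $\tilde{\eta}$ obtained by clamping $\eta$ to $\xi$ on $[a,b]$, compute the one-sided derivative of $D$ at $\tau=1$, and invoke concavity of $D$ plus uniqueness of the maximizer. You instead work locally on a maximal non-coincidence interval $(\alpha,\beta)$: there $u=1/\sqrt{\eta''}$ is affine (since $\mu((\alpha,\beta))=0$) while $v=1/\sqrt{\xi''}$ is concave (since $\mathfrak{d}\le 0$), so $w=u-v$ is convex, $\le 0$ at the endpoints by consistency, hence $\le 0$ throughout, forcing $g''=\eta''-\xi''\ge 0$; but then the non-negative function $g$ vanishing at $\alpha,\beta$ would have to be $\le 0$, a contradiction. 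This is more elementary than the paper's computation. One point to spell out: when $\alpha=0$ you need $v$ finite near $0$, i.e.\ $\xi''(0)>0$; this in fact follows from the hypothesis, because a finite concave function cannot blow up to $+\infty$ at an endpoint of its domain, and the paper treats exactly this boundary case separately.

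Your case (1), by contrast, has a genuine gap. You assert that concavity of $w=u-v$ on each sub-interval, the inequalities $w(s_i)\le 0$ at the three coincidence points, and the monotonicity of the slope of $u$ together force $w<0$ strictly in the interiors. This does not follow and is in fact false. Since $g=\eta-\xi$ vanishes together with its first derivative at each coincidence point (consistency), Rolle's theorem produces a point in each of $(s_1,s_2)$ and $(s_2,s_3)$ where $g''=0$, i.e.\ where $w=0$. Your own chain of implications already defeats the claim: if $w<0$ throughout $(s_1,s_2)$ then $g$ is strictly convex there, but a non-negative strictly convex function vanishing at both $s_1$ and $s_2$ must vanish identically, contradicting the strict convexity. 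The missing ingredient is precisely Rolle applied to $g'$. The paper's argument: pick $p_1\in(s_1,s_2)$ and $p_2\in(s_2,s_3)$ with $w(p_1)=w(p_2)=0$; since $u$ is concave (only $(1/\sqrt{\eta''})''=-\mu\le 0$ is needed, not the piecewise-affine refinement you invoke) and $v$ is strictly convex where $\mathfrak{d}>0$, $w$ is strictly concave, hence $w>0$ strictly on $(p_1,p_2)\ni s_2$; but consistency gives $w(s_2)\le 0$, a contradiction.
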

\begin{rem}
The first case can be seen on the primal side in the work of Talagrand \cite[Proposition 2.2]{TalSphPF06},
and also in \cite{CriSom92}. The second case is new and is the crux
of our reduction of RSB. 
\end{rem}

Combining this result with the duality theory from the previous section gives a bound on the complexity of RSB.
Observe that $\mathfrak{d}$ is either identically
zero or has finitely many roots in $[0,1]$ by the analyticity of
$\frac{3}{2}(\xi''')^{2}-\xi''\xi''''$ in the unit disc $B(0,1)\subset\C$. 
\begin{cor}
\label{cor:RSBreduction}Let $\{\mathfrak{d}>0\}=\cup_{i=1}^{N_{p}}(a_{i},b_{i})$
be the decomposition into connected components and similarly let $\{\mathfrak{d}\leq0\}=\cup_{i=1}^{N_{n}}[a_{i}',b_{i}']$.
Then, $\mu$ must be of the form
\[
\mu=\sum_{i=1}^{N_{p}}m_{i_{1}}\delta_{q_{i_{1}}}+m_{i_{2}}\delta_{q_{i_{2}}}+\sum_{i=1}^{N_{n}}-\mathfrak{d}\indicator{[r_{i_{1}},r_{i_{2}}]}ds+n_{i_{1}}\delta_{r_{i_{1}}}+n_{i_{2}}\delta_{r_{i_{2}}}
\]
where the points $q_{i_{1}},q_{i_{2}}\in[a_{i},b_{i}]$ and $r_{i_{1}},r_{i_{2}}\in[a_{i}',b_{i}']$.\end{cor}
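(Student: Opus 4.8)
The plan is to combine \prettyref{thm:duality}, \prettyref{prop:dual-regularity}, and \prettyref{thm:Rule-of-Signs} to pin down the structure of $\mu$ componentwise. By \prettyref{prop:dual-regularity}, $\mathrm{supp}(\mu)$ is contained in the coincidence set $\{\eta = \xi\}$, and on the interior of the coincidence set the optimality relation $\eta'' \phi_\mu^2 = 1$ together with $\eta = \xi$ there forces $\eta'' = \xi''$, hence $\phi_\mu'' = -\mu = -(1/\sqrt{\xi''})'' = -\mathfrak{d}$ as a distribution on any open subinterval of the coincidence set. So wherever the coincidence set has nonempty interior, $\mu$ has an absolutely continuous density equal to $-\mathfrak{d}$ there; and from $\mu \geq 0$ this already shows such intervals can only occur where $\mathfrak{d} \leq 0$. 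The remaining task is to count and locate the pieces of $\{\eta = \xi\}$ using the Rule of Signs.

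The key steps, in order. First, I would partition $[0,1)$ using the (finitely many, by the stated analyticity) sign changes of $\mathfrak{d}$, writing $\{\mathfrak{d} > 0\} = \cup_{i=1}^{N_p}(a_i,b_i)$ and $\{\mathfrak{d} \leq 0\} = \cup_{i=1}^{N_n}[a_i',b_i']$ as in the statement, and separately noting that $\mu$ is supported away from $1$ by \prettyref{lem:mu-in-Q} (invoked in the proof of \prettyref{prop:dual-regularity}), so the point $1$ needs no special treatment. Second, on each positivity component $(a_i,b_i)$: case (1) of \prettyref{thm:Rule-of-Signs} gives $\mathrm{card}(\{\eta=\xi\} \cap [a_i,b_i]) \leq 2$; combined with the fact that an interval in $\{\eta = \xi\}$ would force $\mu = -\mathfrak{d} < 0$ there (impossible), the contribution of $\mu$ restricted to $[a_i,b_i]$ is purely atomic, supported on at most two points, giving the term $m_{i_1}\delta_{q_{i_1}} + m_{i_2}\delta_{q_{i_2}}$ with $q_{i_1},q_{i_2} \in [a_i,b_i]$. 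Third, on each non-positivity component $[a_i',b_i']$: I would show the coincidence set meets this component in a single closed interval $[r_{i_1},r_{i_2}]$ (possibly degenerate or empty). Here case (2) of \prettyref{thm:Rule-of-Signs} is the engine — if two points of $[a_i',b_i']$ lie in $\{\eta = \xi\}$ then the whole interval between them does — so $\{\eta = \xi\} \cap [a_i',b_i']$ is an interval $[r_{i_1},r_{i_2}]$; on its interior $\mu = -\mathfrak{d}\,\mathbbm{1}_{[r_{i_1},r_{i_2}]}\,ds$ by the computation above, and at the two endpoints $\mu$ may carry atoms $n_{i_1}\delta_{r_{i_1}} + n_{i_2}\delta_{r_{i_2}}$ (these arise because $\phi_\mu$ can have corners at the boundary of the coincidence set, i.e. $\mu$ can jump there). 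Fourth, I would assemble: since $\mathrm{supp}(\mu) \subset \{\eta = \xi\} = \cup_i (\{\eta=\xi\}\cap[a_i,b_i]) \cup \cup_i (\{\eta=\xi\}\cap[a_i',b_i'])$, summing the contributions gives exactly the claimed form.

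The main obstacle I anticipate is the bookkeeping at the shared endpoints of adjacent components, where a point $b_i = a_{i+1}'$ could be claimed both as a potential atom $q_{i_2}$ of a positivity block and as an endpoint atom $n_{(i+1)_1}$ of a negativity block. One must argue that this overcounting is harmless — either such a boundary point genuinely carries a single atom that can be assigned to either list without changing the total measure, or a careful accounting shows at most the stated number of atoms per component and the union is taken with multiplicity absorbed. A related subtlety is confirming that on an interval $(r_{i_1}, r_{i_2}) \subset \{\eta = \xi\}$ one really has $\eta'' = \xi''$ pointwise a.e. (not merely $\eta'' \geq \xi''$ as in the consistency condition), which follows because $\eta - \xi \equiv 0$ on an open set forces all its derivatives to vanish there; this is routine but worth stating explicitly since it is what converts the consistency \emph{inequality} into the equality $\mu = -\mathfrak{d}$ needed for the density term. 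Everything else — positivity of $\mu$, the total-mass normalization, measurability of $\mathfrak{d}$ — is immediate from the setup.
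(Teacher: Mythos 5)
Your proposal is correct and follows essentially the same route as the paper's proof: combine the support condition $\mathrm{supp}\,\mu\subseteq\{\eta=\xi\}$ and the relation $(1/\sqrt{\eta''})''=-\mu$ from \prettyref{prop:dual-regularity} with the two cases of \prettyref{thm:Rule-of-Signs}, noting that on any open subinterval of the coincidence set $\eta''=\xi''$ so $\mu$ has density $-\mathfrak{d}$ there. The paper gives only a two-line sketch; your version supplies the details (the $\eta''=\xi''$ equality on interior points, positivity forcing coincidence intervals into $\{\mathfrak{d}\le0\}$, closedness of the coincidence set, and the harmless overcounting at shared component endpoints), all of which are correct and implicit in the paper's argument.
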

\begin{rem}
See \cite[Section 1.2]{JagTobLT16} for several examples of this reduction
at zero-temperature (without the constraint that $\mu\in\Pr([0,1])$). \end{rem}
\begin{proof}
This follows by combining the optimality part of \prettyref{prop:dual-regularity}
with \prettyref{thm:Rule-of-Signs}. In particular, the first sum comes from the first part of the rule of signs. 
The second sum follows from the second part of the rule of signs,
combined with \prettyref{eq:optimaletaeqn} and, in particular, the
fact that if $(a,b)\subset \text{supp}\, \mu\subset\{\eta=\xi\}$ then $\mu$ has density $-\mathfrak{d}$ on that interval.
\end{proof}
As a quick application of \prettyref{cor:RSBreduction}, we demonstrate
how one can ascertain the nature of $\mu$ in the case that $\xi''(0)=0$.
In this case, one has that $\mathfrak{d}>0$ on a neighborhood of
$0$. (See the proof of \prettyref{thm:Rule-of-Signs} \vpageref{proof:rule-of-signs}.)
Thus, on that same neighborhood, there can be at most two atoms in
the support of $\mu$. %
A similar result was shown in \cite{AuffChen13}.

\subsection{Some remarks on Replica Symmetry}

We now comment briefly on the analysis of the RS region of the Crisanti-Sommers problem.
It is an important and natural question to determine the region in
$(\xi,h)$-space where the optimal measure is a single Dirac mass.
On the dual side, the RS region contains the region where the coincidence
set is a single point (given a positive solution of \prettyref{quest:shallowpts},
these sets would coincide). 

The standard method used to analyze RS is the ansatz-driven approach
which we recap now briefly. Assume that the coincidence set consists
of a single point $q$. Then by the optimality and consistency conditions \prettyref{eq:optimaletaeqn} and
\prettyref{eq:consistency}, this point must satisfy
\begin{align}
q& =  (\xi'(q)+h^{2})(1-q)^{2}\label{eq:fixedpointequation}\\
1& \geq  \xi''(q)(1-q)^{2}\label{eq:Repliconeigenvalue}
\end{align}
If one proposes $q$ which satisfies these relations, the question
reduces to that of determining whether or not the corresponding ansatz,
$\eta$, lies above the obstacle. For example at $h=0$, this is equivalent
to the condition of Talagrand \cite[Proposition 2.3]{TalSphPF06}. 

An important observation in these approaches is that the condition
of strict inequality in \prettyref{eq:Repliconeigenvalue} corresponds
to the positivity of what is called the ``replicon eigenvalue''
in the physics literature (cf.\ \cite[equation (4.9)]{CriSom92}).
It is an important and non-trivial question to determine for which
parameters this positivity is also sufficient. This is the question
of whether or not the de Almeida-Thouless line for the spherical model
is the boundary between the RS and RSB phases \cite{CriSom92}. The
analogous question for the Ising-spin models, where $\sigma\in\{-1,1\}^{N}$, has been the subject
of much research (see, e.g., \cite{TalBK11Vol2,Ton02,Chen15,JagTobPD15}). 

For an example of how the duality method can be used to study this problem,
consider the following. If $q$ solves the fixed point equation, then
for $t\leq q$, we have $\eta\geq\xi$ by positivity of the replicon
eigenvalue. It remains to show that this positivity implies the result
for $t\geq q$. Note that it suffices to prove that in fact 
\[
\eta''(t)\geq\xi''(t)\quad\forall t\geq q.
\]
Observe that this condition is implied by the inequality 
\[
1\geq\xi''(1)(1-q)^{2}.
\]
A straightforward manipulation of \prettyref{eq:fixedpointequation}
and \prettyref{eq:Repliconeigenvalue} shows that this holds provided
that $h^{2}\geq\xi''(1)$. This argument is the dual version of those
in \cite{JagTobPD15,TalBK11Vol2,Ton02}. 

One can also give ansatz-free arguments for RS. Consider the following
proof of RS for $\xi''(1)$ sufficiently small. (An identical result
is known for the Ising-spin models \cite{Chen15,JagTobPD15}.) Observe that by \prettyref{eq:consistency},
if there are two coincidence points, $a,b$, then they must satisfy
\[
\frac{1}{b-a}\int_{a}^{b}\eta''=\frac{\xi'(b)-\xi'(a)}{b-a}.
\]
The righthand side is upper bounded by $\xi''(1)$ by monotonicity
and the lefthand side is strictly lower bounded by $1$ by the regularity
part of \prettyref{prop:dual-regularity}. Thus for $\xi''(1)\leq1$
there must be exactly one coincidence point and one atom in the support
of $\mu$. A similar proof can be
given for the Ising-spin models using the fixed point equation and the
maximum principle for the Parisi PDE.

\subsection*{A discrepancy between scalings of spherical and Ising-spin models}

A natural question is to study the asymptotics of the Crisanti-Sommers
variational problem in various limits where $\beta\to\infty$ (see,
e.g., \cite{ChenSen15,AT78,JagTobLT16,JagTobPD15}). Taking the limit
$\beta\to\infty$ along the curve $\{\beta,h:\eta''(q)-\beta^{2}\xi_{0}''(q)=\Lambda\}$,
one finds, after manipulating \prettyref{eq:fixedpointequation} and
\prettyref{eq:Repliconeigenvalue}, that $\beta(1-q)\to c>0$. This
scaling, however, is different in the Ising-spin version of
this model, i.e., when we restrict $H_{N}$ to $\{-1,1\}^{N}$. In
particular, it was shown by the authors in \cite{JagTobLT16} that
along the equivalent positive replicon eigenvalue curve for the Ising
spin models, one has that $\beta^{2}(1-q)\to c>0$ for some $c$. Thus,
the large $\beta$ asymptotics in the two models appear to be fundamentally
different, at least in the RS regime.

\subsection{Some Open Questions}

Before turning to the body of this paper, let us first point out the
following natural questions which we leave for future research. A
major obstruction in the analysis of \prettyref{eq:Crisanti-Sommers-var-prob-def}
and similar problems (see, e.g., \cite{AuffChen13,JagTobPD15,TalBK11Vol2})
on the primal side is the existence of points at which the variational
derivative $G_{\mu}$ (in the notation of \cite{JagTobPD15}) achieves
its minimum value, but which are not in the support of $\mu$. In
the language of this paper, this is the question of whether or not
the set containment $\text{supp}\,\mu\subseteq\{\eta=\xi\}$ is strict.
Thus, an important question for future study in this field is:
\begin{quest}
\label{quest:shallowpts} For which $\xi$ and $h$ are the coincidence
set and the support of $\mu$ the same?
\end{quest}
As a first step toward the resolution of this question, we note the following proposition.
\begin{prop}
If $\mathfrak{d}$ is not identically zero, then the elements of $\{\eta=\xi\}\backslash \text{supp}\, \mu$
are isolated points. If $\mathfrak{d}$ is identically zero, then $\mu=m\delta_{r_{1}}+(1-m)\delta_{r_{2}}\ $
for $m\in[0,1]$ and $r_{1},r_{2}\in[0,1]$, and $\{\eta=\xi\}=[r_{1},r_{2}]$.
\end{prop}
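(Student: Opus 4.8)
The plan is to handle the two cases separately, in each exploiting the observation that, away from $\text{supp}\,\mu$, the affineness of $\phi_\mu$ promotes the optimal dual variable $\eta$ from $C^{2}$ to real-analytic, which makes the coincidence set locally rigid. I will freely use the following facts established above: $\eta\in C^{2}([0,1))$ with $\eta''=1/\phi_{\mu}^{2}$ on $[0,1)$ (from the proof of \prettyref{prop:dual-regularity}); $\text{supp}\,\mu\subseteq\{\eta=\xi\}$, and the consistency relations \eqref{eq:consistency} hold at every point of the coincidence set; $\mu$ is supported away from $1$ by \prettyref{lem:mu-in-Q}, so that $\phi_{\mu}>0$ on $[0,1)$; and $\xi$, being given by a power series convergent at $1+\epsilon$, is real-analytic on a neighborhood of $[0,1]$.

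Suppose first that $\mathfrak{d}\not\equiv0$. Fix $p\in\{\eta=\xi\}\setminus\text{supp}\,\mu$ and choose $\delta>0$ with $\mu(I)=0$, where $I:=(p-\delta,p+\delta)\cap[0,1)$. Then $s\mapsto\mu([0,s])$ is constant on $I$, so $\phi_{\mu}$ is affine and strictly positive there; hence $\eta''=1/\phi_{\mu}^{2}$ is real-analytic on $I$, and, integrating twice, $\eta$ — and therefore $g:=\eta-\xi$ — is real-analytic on $I$. I would next rule out $g\equiv0$ on $I$: were it so, $1/\sqrt{\xi''}=\phi_{\mu}$ would be affine on $I$, so $\mathfrak{d}=(1/\sqrt{\xi''})''$ would vanish on an open interval, and then the analyticity of $\frac{3}{2}(\xi''')^{2}-\xi''\xi''''$ on $B(0,1)\subset\C$ (as used just above \prettyref{cor:RSBreduction}) would force $\mathfrak{d}\equiv0$, a contradiction. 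Thus $g$ is a nontrivial real-analytic function on $I$, so its zero set $\{\eta=\xi\}\cap I$ is discrete; since $I$ is a neighbourhood of $p$, it follows that $p$ is isolated in $\{\eta=\xi\}$, hence also in $\{\eta=\xi\}\setminus\text{supp}\,\mu$. (Alternatively, one can read off from \prettyref{thm:Rule-of-Signs} and \prettyref{cor:RSBreduction} that this set is in fact finite.)

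Now suppose $\mathfrak{d}\equiv0$, so that $1/\sqrt{\xi''}$ is affine on $[0,1]$. Since $\mathfrak{d}\le0$ on every subinterval, \prettyref{thm:Rule-of-Signs}(2) shows $\{\eta=\xi\}$ is convex; it is nonempty (it contains $\text{supp}\,\mu$) and relatively closed in $[0,1)$. It cannot reach $1$: otherwise it would contain an interval $(1-\delta_{0},1)$, on which $\eta''=1/(1-s)^{2}$ by the regularity part of \prettyref{prop:dual-regularity}, forcing $\xi''=1/(1-s)^{2}$ there, which is impossible since $\xi''$ is bounded near $1$. Hence $\{\eta=\xi\}=[r_{1},r_{2}]$ for some $0\le r_{1}\le r_{2}<1$. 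On $[r_{1},r_{2}]$ we have $\eta=\xi$, hence $\eta''=\xi''$ and $\phi_{\mu}=1/\sqrt{\eta''}=1/\sqrt{\xi''}$ is affine there; since $\phi_{\mu}''=-\mu$ as distributions, this gives $\mu((r_{1},r_{2}))=0$. Combined with $\text{supp}\,\mu\subseteq[r_{1},r_{2}]$ and $\mu([0,1])=1$, we conclude $\mu=m\delta_{r_{1}}+(1-m)\delta_{r_{2}}$ with $m:=\mu(\{r_{1}\})\in[0,1]$ (degenerating to $\mu=\delta_{r_{1}}$ when $r_{1}=r_{2}$).

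I expect the main obstacle to be the first case: the point is to notice that the vanishing of $\mu$ near $p$ upgrades $\eta$ from $C^{2}$ to real-analytic there, and it is this analyticity — not the $C^{2}$ regularity of \prettyref{prop:dual-regularity} alone — that precludes a Cantor-like coincidence set off $\text{supp}\,\mu$. In the second case the only delicate point is that the atoms of $\mu$ must sit at the endpoints of the coincidence interval rather than in its interior, which is again forced by the affineness of $\phi_{\mu}$ on $\{\eta=\xi\}$.
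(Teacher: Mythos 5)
Your proof is correct, and the first half takes a genuinely different route from the paper. For the case $\mathfrak{d}\not\equiv0$, the paper argues globally: it applies the Rule of Signs together with \prettyref{cor:RSBreduction} to see that $\{\eta=\xi\}$ is a finite union of points and nontrivial intervals, and then shows every such interval lies in $\text{supp}\,\mu$ because on it $\mu$ has density $-\mathfrak{d}$, which vanishes only finitely often. You instead argue locally: near any $p\notin\text{supp}\,\mu$ the function $\phi_{\mu}$ is affine, so $\eta''=1/\phi_{\mu}^{2}$, and hence $g=\eta-\xi$, is real-analytic there; ruling out $g\equiv0$ via the analyticity of $\mathfrak{d}$, the zero set of $g$ is discrete near $p$. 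This buys you a proof that does not go through the Rule of Signs at all, but it yields only isolation, whereas the paper's global argument yields finiteness (as you note parenthetically). For the case $\mathfrak{d}\equiv0$, your argument is essentially the one the paper intends but leaves terse: the paper simply cites \prettyref{cor:RSBreduction}, which gives the two-atom form of $\mu$ but does not by itself produce the identity $\{\eta=\xi\}=[r_1,r_2]$; you fill that in explicitly (convexity of the coincidence set via Rule of Signs (2), the set cannot reach $1$, and the affinity of $\phi_{\mu}$ on $[r_1,r_2]$ forces the atoms to the endpoints). Your write-up is the more complete of the two on this point.
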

\begin{proof}
Assume that $\mathfrak{d}$ is not identically zero. By the rule of
signs, the coincidence set is at most a union of isolated points and
non-trivial intervals. Thus it remains to show that any interval in
the coincidence set must be in the support of $\mu$. To this end,
observe that on any interval in the contact set, $\mu$ has density
$-\mathfrak{d}$ by \eqref{eq:optimaletaeqn}. By assumption, $\mathfrak{d}$
has only finitely many zeros so the interval must be in the support. 

Now assume that $\mathfrak{d}$ is identically zero. The result then
follows from \prettyref{cor:RSBreduction}.
\end{proof}
Another natural question is the following:
\begin{quest}
Find a probabilistic interpretation for the dual variable, $\eta$.
\end{quest}

\subsection{Acknowledgments}

We would like to thank  R.V.\ Kohn for helpful discussions. We would like to thank A.\ Auffinger for encouraging the preparation of this paper. We would like to thank D.\ Panchenko for helpful comments regarding the presentation of this work. This research was conducted while A.J.\ was supported by a National Science Foundation
Graduate Research Fellowship DGE-0813964; and National Science Foundation
grants DMS-1209165 and OISE-0730136, and while I.T.\ was supported
by a National Science Foundation Graduate Research Fellowship DGE-0813964;
and National Science Foundation grants OISE-0967140 and DMS-1311833.

\section{Proof of Duality}

In this section we prove \prettyref{thm:duality}. For readability,
we neglect to write the subscripts $\xi,h$ here and throughout the
remainder of the paper. We also abbreviate $\Pr=\Pr([0,1])$.

Define the set $Q=\{\mu\in\Pr:\sup\text{supp}\,\mu<1\}$ and recall
by \cite{TalSphPF06} that the minimizer of \prettyref{eq:Crisanti-Sommers-var-prob-def}
lies in $Q$. (For an alternative proof see \prettyref{lem:mu-in-Q}.)
Define the quantities 
\begin{align*}
\tilde{D}(\eta) & =\int_{0}^{1}2\sqrt{\eta''}-\eta''(s)\cdot(1-s)-\frac{1}{1-s}ds-\eta(0)+\inf_{t\in[0,1)}\left\{ \eta(t)-\xi(t)\right\} +h^{2}+\xi(1)\\
\tilde{P}(\mu) & =2P(\mu).
\end{align*}
 First, we observe the following integration by parts lemma, whose
proof we defer to the end of this section.
\begin{lem}
\label{lem:IBP}Let $\mu,\nu\in Q$ and $\eta\in X$. Then 
\[
\int\left(\eta''-\xi''\right)\left(\mbox{\ensuremath{\phi_{\nu}}-\ensuremath{\phi_{\mu}}}\right)\,dt=-\eta'(0)\cdot(\phi_{\nu}(0)-\phi_{\mu}(0))-\int\left(\eta-\xi\right)d(\nu-\mu).
\]

\end{lem}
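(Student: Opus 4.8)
The plan is a careful double integration by parts, where the only genuine subtlety is the bookkeeping of the atoms of $\mu$ and $\nu$ at the point $0$.

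First I would use the hypothesis $\mu,\nu\in Q$ to choose $\delta>0$ with $\text{supp}\,\mu,\text{supp}\,\nu\subset[0,1-\delta)$. Then $s\mapsto\mu([0,s])$ and $s\mapsto\nu([0,s])$ are both identically $1$ on $[1-\delta,1]$, so $\phi_\mu(t)=\phi_\nu(t)=1-t$ there. Writing $g=\eta-\xi$ and $\psi=\phi_\nu-\phi_\mu$, this means that $\psi$ and $\psi'$ vanish identically on $[1-\delta,1]$, so every integral in the statement is really an integral over $[0,1-\delta]$, where nothing is singular: $g\in C^1([0,1-\delta])$ with $g'$ Lipschitz (hence $g''=\eta''-\xi''\in L^\infty([0,1-\delta])$), and $\psi$ is Lipschitz with $\psi'$ of bounded variation. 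The first integration by parts is then an ordinary one (both $g'$ and $\psi$ are absolutely continuous), giving
\[
\int_0^{1-\delta}g''\psi\,dt=[g'\psi]_0^{1-\delta}-\int_0^{1-\delta}g'\psi'\,dt=-g'(0)\psi(0)-\int_0^{1-\delta}g'\psi'\,dt,
\]
the boundary term at $1-\delta$ dropping since $\psi(1-\delta)=0$, and with $g'(0)=\eta'(0)$ because $\xi'(0)=0$ by the form of $\xi$. For the second one I would note that $\phi_\mu''=-\mu$ and $\phi_\nu''=-\nu$ hold \emph{on the open interval $(0,1)$}, so that $\psi'$ is a function of bounded variation whose Stieltjes measure $d\psi'$ restricted to $(0,1-\delta]$ is $(\mu-\nu)$ with the atoms at $0$ removed. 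Riemann--Stieltjes integration by parts (legitimate since $g$ is continuous and $dg=g'\,dt$) then yields
\[
\int_0^{1-\delta}g'\psi'\,dt=-g(0)\psi'(0)-\int_{(0,1-\delta]}g\,d\psi',
\]
again using $\psi'(1-\delta)=0$.

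The step I expect to require the most care is precisely the identification of $d\psi'$ on the half-open interval $(0,1-\delta]$. Since $\phi_\mu'(0)=-\mu(\{0\})$ and $\phi_\nu'(0)=-\nu(\{0\})$, one has $\psi'(0)=\mu(\{0\})-\nu(\{0\})$, and $d\psi'$ on $(0,1-\delta]$ equals $(\mu-\nu)-\psi'(0)\delta_0$; that is, the atoms of $\mu$ and $\nu$ at $0$ are carried by the boundary value $\psi'(0)$ rather than by the measure. Substituting this into the previous display, the term $\psi'(0)g(0)$ it produces cancels exactly against the boundary term $-g(0)\psi'(0)$ already present, so that $\int_0^{1-\delta}g'\psi'\,dt=-\int g\,d(\mu-\nu)=\int g\,d(\nu-\mu)$. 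Combining this with the first integration by parts, together with $g'(0)=\eta'(0)$ and $g=\eta-\xi$, gives exactly the claimed identity. (Alternatively one could first approximate $\mu$ and $\nu$ by measures with no atom at $0$, or mollify, run the computation with no boundary subtleties, and pass to the limit by dominated convergence; I would keep the direct argument, since the cancellation above is clean once the atom at $0$ is tracked correctly.)
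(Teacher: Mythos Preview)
Your argument is correct. The only phrasing I would tighten is the line ``$d\psi'$ on $(0,1-\delta]$ equals $(\mu-\nu)-\psi'(0)\delta_0$'': what you mean (and use) is that $d\psi'\big|_{(0,1-\delta]}=(\mu-\nu)\big|_{(0,1-\delta]}$, so that $\int_{(0,1-\delta]}g\,d\psi'=\int g\,d(\mu-\nu)-g(0)\psi'(0)$, which then cancels the boundary term exactly as you say.

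The paper takes the alternative route you mention parenthetically: it asserts that by an approximation argument one may assume $\mu,\nu$ have $C_c^\infty((0,1))$ densities and $\eta\in C_c^\infty([0,1))$, and then the two integrations by parts are classical with no atom to track (indeed $\phi_\mu'(0)=\phi_\nu'(0)=0$ in that setting). Your direct argument trades the approximation step for the bookkeeping of the atom at $0$; since the cancellation there is clean and the localization to $[0,1-\delta]$ already removes the singularity of $\eta$ at $1$, your version is arguably the more self-contained of the two.
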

We now begin the proof of \prettyref{thm:duality}. First, we prove
a preliminary duality result:
\begin{lem}
\label{lem:min-max}We have that 
\[
\min_{\mu\in Q}\tilde{P}(\mu)=\max_{\substack{\eta\in X\\
\eta'(0)=-h^{2}
}
}\tilde{D}(\eta).
\]
Furthermore an optimal pair $(\mu,\eta)$ satisfies 
\begin{align*}
\eta''\phi_{\mu}^{2} & =1\quad Leb-a.e.\\
\mu\left(\left\{ \eta(s)-\xi(s)=\inf_{t}\left(\eta(t)-\xi(t)\right)\right\} \right) & =1.
\end{align*}
Finally if a pair $(\mu,\eta)$ satisfies these relations then it
is optimal.\end{lem}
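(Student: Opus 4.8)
The plan is to prove this min-max identity by the standard two-sided argument: first establish weak duality ($\min \tilde P \geq \max \tilde D$, or rather $\inf \geq \sup$) via a pointwise inequality coming from a Legendre transform, then exhibit a matching pair via the stated complementarity relations. The key observation is that for fixed $s$, the map $x \mapsto 2\sqrt{x} - x(1-s)$ is concave and its Legendre-type relationship with $1/\phi^2$ gives the pointwise bound $2\sqrt{\eta''} - \eta''(1-s) \leq \frac{1}{\phi_\mu} + \left(\frac{1}{\phi_\mu} - (1-s)\eta''\right)$; more usefully, by AM--GM one has $2\sqrt{\eta''}\,\phi_\mu \leq \eta''\phi_\mu^2 + 1$ pointwise, with equality iff $\eta''\phi_\mu^2 = 1$.

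First I would write out $\tilde P(\mu) - \tilde D(\eta)$ for arbitrary $\mu \in Q$ and admissible $\eta$ (i.e.\ $\eta \in X$, $\eta'(0) = -h^2$). Recalling $\tilde P(\mu) = 2P(\mu) = \int \xi''\phi_\mu + \int\left(\frac{1}{\phi_\mu} - \frac{1}{1-s}\right) + h^2\phi_\mu(0)$, the difference becomes, after cancelling the $-\frac{1}{1-s}$ terms and the $h^2$ terms,
\[
\tilde P(\mu) - \tilde D(\eta) = \int_0^1 \left( \xi''\phi_\mu + \frac{1}{\phi_\mu} - 2\sqrt{\eta''} + \eta''(1-s) \right) ds + h^2\phi_\mu(0) + \eta(0) - \inf_t\{\eta(t) - \xi(t)\} - \xi(1).
\]
The route is to produce the term $\int \eta''\phi_\mu$ from the integration-by-parts Lemma~\ref{lem:IBP}: applying that lemma with, say, $\nu$ a fixed reference measure — or better, rearranging the identity to express $\int(\eta'' - \xi'')\phi_\mu$ in terms of $-\eta'(0)\phi_\mu(0) - \int(\eta-\xi)\,d\mu$ plus $\nu$-terms that I would handle by choosing $\nu$ to be a point mass at $1$ (noting $\phi_{\delta_1} = 0$) or by a limiting argument. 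Using $\eta'(0) = -h^2$, the boundary term $-\eta'(0)\phi_\mu(0) = h^2\phi_\mu(0)$ cancels the external-field term. This converts the integrand to $\frac{1}{\phi_\mu} - 2\sqrt{\eta''} + \eta''\phi_\mu$ — wait, more precisely $\eta''\phi_\mu$ only appears after combining $\int \xi''\phi_\mu$ with the IBP identity, which replaces $\int(\eta''-\xi'')\phi_\mu$; so I should track that $\int\xi''\phi_\mu + \int(\eta''-\xi'')\phi_\mu = \int \eta''\phi_\mu$. After these cancellations I expect to land on
\[
\tilde P(\mu) - \tilde D(\eta) = \int_0^1 \left( \frac{1}{\phi_\mu} - 2\sqrt{\eta''} + \eta''\phi_\mu \right) ds + \left( \eta(0) - \int \eta\, d\mu \right) - \left(\inf_t\{\eta-\xi\}\right) - \left(\xi(1) - \int\xi\,d\mu - \text{something}\right),
\]
and the remaining $\eta$--$\xi$ boundary/measure terms should collapse to $\int(\eta - \xi)\,d\mu - \inf_t(\eta-\xi) \geq 0$ since $\mu$ is a probability measure. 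Then the integrand is $\frac{1}{\phi_\mu}\left(1 - 2\sqrt{\eta''}\phi_\mu + \eta''\phi_\mu^2\right) = \frac{1}{\phi_\mu}\left(1 - \sqrt{\eta''}\phi_\mu\right)^2 \geq 0$, giving weak duality, with equality in both places exactly when $\eta''\phi_\mu^2 = 1$ a.e.\ and $\mu$ is supported on the $\mathrm{argmin}$ of $\eta - \xi$ — which are precisely the asserted optimality relations.

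For the other direction (attainment and the "final if" clause), I would note that the pointwise computation above already shows that \emph{any} pair satisfying the two relations has $\tilde P(\mu) = \tilde D(\eta)$, hence by weak duality both are optimal; this simultaneously gives the last sentence of the lemma. It remains to produce \emph{one} such pair: take $\mu$ to be the known minimizer of $\tilde P$ over $Q$ (exists by strict convexity and lower semicontinuity, lies in $Q$ by \cite{TalSphPF06} / Lemma~\ref{lem:mu-in-Q}), and \emph{define} $\eta = \eta_\mu$ via $\eta'' = 1/\phi_\mu^2$, $\eta'(0) = -h^2$, with the additive constant chosen so that $\inf_t(\eta - \xi) = 0$ and, crucially, so that $\mu$ gives full mass to $\{\eta = \xi\}$ — the latter is where the first-order optimality (variational inequality \eqref{eq:var-ineq}) for the primal minimizer must be invoked: the Euler--Lagrange condition for $\mu$ says precisely that $\eta_\mu - \xi$ attains its minimum on $\mathrm{supp}\,\mu$. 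I also need $\eta \in X$, i.e.\ $\eta' \in Lip_{\mathrm{loc}}([0,1))$, which follows since $\phi_\mu$ is continuous and bounded below on $[0,1-\delta]$ (as $\mu \in Q$). The main obstacle I anticipate is the careful bookkeeping in the integration-by-parts step — handling the measure $\nu$ in Lemma~\ref{lem:IBP} cleanly (choosing it to kill unwanted terms, or passing to a limit) and making sure all boundary terms at $s=1$ are finite and cancel, using $\xi(1+\epsilon) < \infty$ and $\mu \in Q$ — together with verifying that the chosen additive constant for $\eta$ is consistent (that $\inf(\eta_\mu - \xi) = 0$ is actually achievable, not $-\infty$), which again rests on the primal optimality of $\mu$ rather than on any soft argument.
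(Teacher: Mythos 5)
Your proposal follows the same route as the paper: a pointwise AM--GM/Legendre bound giving weak duality, the integration-by-parts Lemma~\ref{lem:IBP} to convert $\int(\eta''-\xi'')\phi_\mu$ into $\int(\eta-\xi)\,d\mu$ plus boundary terms, and attainment by taking the primal minimizer $\mu$, setting $\eta_\mu'' = 1/\phi_\mu^2$ with $\eta_\mu'(0)=-h^2$, and invoking the primal variational inequality to force $\mu$ onto the argmin of $\eta_\mu - \xi$. One small correction worth making: the reference measure $\nu=\delta_1$ is not in $Q$ and makes the boundary term at $s=1$ genuinely nontrivial (since $\phi_\mu=1-s$ near $1$ does not vanish), which is exactly the worry you flag; the paper instead takes $\nu=\delta_0$, for which $\phi_{\delta_0}-\phi_\mu \equiv 0$ near $1$ because $\mu\in Q$, so the boundary term at $1$ disappears automatically and the identity $\int\xi''(1-s)\,ds=\xi(1)$ absorbs the remaining $\xi$-term.
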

\begin{proof}
We begin by proving the first part. Recall that for all $a,\lambda\in(0,\infty)$
we have 
\begin{equation}
2\sqrt{\lambda}\leq\lambda a+\frac{1}{a}\label{eq:sqrt-legendre}
\end{equation}
with equality if and only if $\lambda a^{2}=1$. With this in mind,
we have for all $\mu\in Q$ and $\eta\in\Lambda$ that 
\begin{align*}
\tilde{P}(\mu) & \geq\int_{0}^{1}\xi''(s)\phi_{\mu}(s)+2\sqrt{\eta''}-\phi_{\mu}\eta''-\frac{1}{1-s}ds+h^{2}\phi_{\mu}(0)\\
 & =\int_{0}^{1}(\eta''(s)-\xi''(s))(1-s-\phi_{\mu}(s))+\int_{0}^{1}2\sqrt{\eta''}-\eta''(s)\cdot(1-s)-\frac{1}{1-s}\\
&\qquad +\int_{0}^{1}\xi''(s)\cdot(1-s)ds+h^{2}\phi_{\mu}(0).
\end{align*}

Applying \prettyref{lem:IBP} with $\nu=\delta_{0}$, we may integrate
by parts to find that the last line is equal to 
\[
K(\mu,\eta)=\int_{0}^{1}(\eta-\xi)d\mu+\int_{0}^{1}2\sqrt{\eta''}-\eta''(s)\cdot(1-s)-\frac{1}{1-s}ds-\eta(0)+h^{2}+\xi(1).
\]
That is, 
\[
\tilde{P}(\mu)\geq K(\mu,\eta).
\]
Observe that if we define $\eta_{\mu}$ by 
\begin{equation}
\eta_{\mu}=-h^{2}t+\int_{0}^{t}\int_{0}^{s}\frac{1}{\phi_{\mu}^{2}}d\tau ds,\label{eq:conj_eta}
\end{equation}
then it satisfies $\eta\in X$ and $\eta'(0)=-h^{2}$. By the case
of equality in \prettyref{eq:sqrt-legendre}, we see that (up to the
addition of a constant) it uniquely achieves the equality 
\[
\tilde{P}(\mu)=K(\mu,\eta_{\mu}).
\]
Hence, 
\[
\tilde{P}(\mu)=\sup_{\substack{\eta\in X\\
\eta'(0)=-h^{2}
}
}\,K(\mu,\eta).
\]

Evidently, 
\[
\inf_{\mu\in Q}\,K(\mu,\eta)=\tilde{D}(\eta).
\]
This implies that 
\[
\inf_{\mu\in Q}\,\tilde{P}(\mu)=\inf_{\mu\in Q}\sup_{\substack{\eta\in X\\
\eta'(0)=-h^{2}
}
}\,K(\mu,\eta)\geq\sup_{\substack{\eta\in X\\
\eta'(0)=-h^{2}
}
}\inf_{\mu\in Q}\,K(\mu,\eta)=\sup_{\substack{\eta\in X\\
\eta'(0)=-h^{2}
}
}\,\tilde{D}(\eta).
\]
The first part of the result then follows provided we can find a pair
$(\mu,\eta)$ that achieves the equality 
\[
\tilde{P}(\mu)=K(\mu,\eta)=\tilde{D}(\eta).
\]
 Let $\mu$ be the optimizer of $P$ and define $\eta=\eta_{\mu}$
as in \prettyref{eq:conj_eta}. The result then follows provided 
\[
K(\mu,\eta)=\tilde{D}(\eta).
\]
Observe that this will happen if and only if
\begin{equation}
\int\eta-\xi\,d\mu=\inf\,\{\eta-\xi\}\label{eq:eq_d}
\end{equation}
so it suffices to show this equality. To see \prettyref{eq:eq_d},
note that the first order optimality conditions for \prettyref{eq:Crisanti-Sommers-var-prob-def}
are that $\mu$ satisfies 
\[
\int\left(\xi''-\frac{1}{\phi_{\mu}^{2}}\right)(\phi_{\nu}-\phi_{\mu})dt+h^{2}(\phi_{\nu}(0)-\phi_{\mu}(0))\geq0\quad\forall\,\nu\in Q.
\]
Identifying the integrand, we see that this is 
\begin{equation}
h^{2}(\phi_{\nu}(0)-\phi_{\mu}(0))-\int\left(\eta''-\xi''\right)(\phi_{\nu}-\phi_{\mu})dt\geq0.\label{eq:eq_b}
\end{equation}
Integrating by parts (see \prettyref{lem:IBP}), we see that 
\[
\int\left(\eta''-\xi''\right)\left(\mbox{\ensuremath{\phi_{\nu}}-\ensuremath{\phi_{\mu}}}\right)dt=h^{2}(\phi_{\nu}(0)-\phi_{\mu}(0))-\int\left(\eta-\xi\right)d(\nu-\mu).
\]
Combining this with \prettyref{eq:eq_b} yields 
\[
\int\eta-\xi d\nu\geq\int\eta-\xi d\mu
\]
for all $\nu\in Q$. The first result is then immediate. 

Observe by the construction of $\eta_{\mu}$ and the preceding discussion
that the optimal pair constructed satisfies the above relations. Since
the two conditions \prettyref{eq:eq_d} and that $\eta''\phi^{2}=1$
were necessary and sufficient for the equalities to hold, we see that
the optimality conditions characterize optimal pairs as desired.
\end{proof}
We can now give the proof of the duality. 

\begin{proof}[Proof of \prettyref{thm:duality}] \label{proof:proofofduality}
Observe that by \prettyref{lem:min-max}, and the fact that the minimizer
of \prettyref{eq:Crisanti-Sommers-var-prob-def} belongs to $Q$ (see
\prettyref{lem:mu-in-Q}), we have that 
\begin{align*}
\min_{\mu\in\Pr}P(\mu) & =\max_{\substack{\eta\in X\\
\eta'(0)=-h^{2}
}
}\frac{1}{2}\tilde{D}(\eta).
\end{align*}
So, it suffices to show that 
\[
\max_{\substack{\eta\in X\\
\eta'(0)=-h^{2}
}
}\frac{1}{2}\tilde{D}(\eta)=\max_{\eta\in\Lambda}D(\eta).
\]
To prove this, we begin by observing that $\tilde{D}$ is invariant
under shifts $\eta\mapsto\eta+k$. Choose an optimal $\eta$ for $\frac{1}{2}\tilde{D}$
(we know an optimizer exists from the proof of \prettyref{lem:min-max}
above). Then if we set $\tilde{\eta}=\eta-\inf\left\{ \eta-\xi\right\} $,
it follows that 
\[
\frac{1}{2}\tilde{D}(\eta)=\frac{1}{2}\tilde{D}(\tilde{\eta})=D(\tilde{\eta}).
\]
Indeed,  $\tilde{\eta}\geq\xi$ and $\inf\,\{\tilde{\eta}-\xi\}=0$. 

That the pair is unique can be seen as follows. Observe first that
$P$ is strictly convex so that the minimizing $\mu$ is unique. For
$D$, note that although it is concave, it is not strictly concave.
Nevertheless, if $\eta_{1}$ and $\eta_{2}$ were maximizers, then
they would have the same second derivative by strict convexity of
the square root function. As their derivative at $0$ would be prescribed
in the definition of $\Lambda$, we see that they could differ only
by a constant. If this constant is non-zero, then this would change
the value of the term $\eta(0)$ appearing in the definition of $D$,
which would be a contradiction. Thus, the maximizing $\eta$ is unique.
That the pair is uniquely determined by the relations \prettyref{eq:opt-conds}
follows from \prettyref{lem:min-max}.

\end{proof}

Finally, we prove the integration by parts lemma that was used above.

\begin{proof}[Proof of \prettyref{lem:IBP}]

By an approximation argument, it suffices to prove \prettyref{lem:IBP}
assuming that $\mu=fds$ and $\nu=gds$ where $f,g\in C_{c}^{\infty}((0,1))$,
and that $\eta\in C_{c}^{\infty}([0,1))$. Given these, it follows
that $\phi_{\mu}-\phi_{\nu}\in C_{c}^{\infty}([0,1))$. Hence, integrating
by parts gives that 
\begin{align*}
\int_{0}^{1}(\eta''-\xi'')(\phi_{\nu}-\phi_{\mu})\,ds & =\int_{0}^{1}-(\eta-\xi)'(\phi_{\nu}'-\phi_{\mu}')\,ds-\eta'(0)(\phi_{\nu}(0)-\phi_{\mu}(0))\\
 & =\int_{0}^{1}(\eta-\xi)(\phi_{\nu}''-\phi_{\mu}'')\,ds-\eta'(0)(\phi_{\nu}(0)-\phi_{\mu}(0))+\eta(0)(\phi_{\nu}'(0)-\phi_{\mu}'(0)).
\end{align*}
Note we used that $\xi(0)=\xi'(0)=0$. Given our assumptions on $\mu$
and $\nu$, 
\[
\int_{0}^{1}(\eta-\xi)(\phi_{\nu}''-\phi_{\mu}'')\,ds=-\int_{0}^{1}(\eta-\xi)\,d(\nu-\mu)
\]
and $\phi_{\nu}'(0)=\phi_{\mu}'(0)=0$. Plugging these into the above
gives the result.

\end{proof}

\section{Proof of the rule of signs}

Throughout this section, we assume that $\eta\in\Lambda$ is optimal,
i.e., that
\[
D(\eta)=\max_{\eta\in\Lambda}D(\eta).
\]
Our goal is to prove \prettyref{thm:Rule-of-Signs}. 

\begin{proof}[Proof of \prettyref{thm:Rule-of-Signs}] \label{proof:rule-of-signs}

We begin by proving (1), which we do by contradiction. Suppose that
there are three coincidence points, $t_{1}$, $t_{2}$, and $t_{3}$,
which satisfy $a\leq t_{1}<t_{2}<t_{3}\leq b$. By \prettyref{eq:consistency},
$\eta'\left(t_{i}\right)=\xi'\left(t_{i}\right)$ for each $i$, and
$\eta''\left(t_{3}\right)\geq\xi''\left(t_{3}\right)$. By the mean
value theorem, there exist points $s_{1},s_{2}$ with $t_{1}<s_{1}<t_{3}<s_{2}<t_{2}$
such that $\eta''\left(s_{i}\right)=\xi''\left(s_{i}\right)$ for
$i=1,2$. By \prettyref{eq:optimaletaeqn}, the function $t\to\frac{1}{\sqrt{\eta''\left(t\right)}}$
is concave. Hence, by the assumption on $\mathfrak{d}$, $t\to\frac{1}{\sqrt{\eta''\left(t\right)}}-\frac{1}{\sqrt{\xi''\left(t\right)}}$
is strictly concave on $(t_{1},t_{2})$. As this function vanishes
at the points $s_{1},s_{2}$, it must be strictly positive between.
Thus, $\xi''\left(t_{2}\right)>\eta''\left(t_{2}\right)$ and this
is a contradiction.

Now we prove (2). We proceed by a first variation argument. Introduce
the path 
\[
[0,1]\to\Lambda,\quad\tau\to\eta_{\tau}=\eta+\tau(\tilde{\eta}-\eta)
\]
where 
\begin{equation}
\tilde{\eta}\left(t\right)=\begin{cases}
\eta\left(t\right) & t\notin\left[a,b\right]\\
\xi\left(t\right) & t\in\left[a,b\right]
\end{cases}.\label{eq:eta-tilde}
\end{equation}
That $\eta_{\tau}\in\Lambda$ for all $\tau$, and in particular that
$\eta_{\tau}\in X$, follows from our assumption that $\eta$ is optimal
and that $a,b\in\{\eta=\xi\}$. In particular, we have that $\eta'=\xi'$
at $a,b$ by \prettyref{eq:consistency}. 

Now we note that $D$ is concave and that the path $\eta_{\tau}$
is linear in $\tau$, so that $\tau\to D(\eta_{\tau})$ is concave.
Thus, to conclude that $[a,b]\subset\{\eta=\xi\}$, we need only to
prove that 
\begin{equation}
\at{\frac{d}{d\tau}^{-}}_{\tau=1}D(\eta_{\tau})\geq0,\label{eq:non-negderivative}
\end{equation}
since it then follows that 
\[
D(\tilde{\eta})=D(\eta_{1})\geq D(\eta_{0})=D(\eta)
\]
so that $\tilde{\eta}$ is also a maximizer of $D$. Since maximizers
of $D$ are unique (as shown in the proof of \prettyref{thm:duality}
\vpageref{proof:proofofduality}), we conclude that $\tilde{\eta}=\eta$
and hence that $[a,b]\subset\{\eta=\xi\}$.

Now we perform the required differentiation. First, note that
\[
\eta_{\tau}(0)=\eta(0)\quad\forall\,\tau.
\]
Also, note that 
\[
\int_{a}^{b}(\eta_{\tau}''-\eta'')(1-s)\,ds=(\eta_{\tau}'-\eta')(1-s)|_{a}^{b}+(\eta_{\tau}-\eta)|_{a}^{b}=0\quad\forall\,\tau
\]
by \prettyref{eq:consistency}. It follows that 
\[
D(\eta_{\tau})-D(\eta)=\int_{a}^{b}\sqrt{\eta_{\tau}''}-\sqrt{\eta''}\,ds\quad\forall\,\tau
\]
so that
\[
D(\eta_{1})-D(\eta_{\tau})=\int_{a}^{b}\sqrt{\eta_{1}''}-\sqrt{\eta_{\tau}''}=\int_{a}^{b}\sqrt{\xi''}-\sqrt{\xi''+(1-\tau)(\eta''-\xi'')}.
\]
Hence,
\[
\at{\frac{d}{d\tau}^{-}}_{\tau=1}D(\eta_{\tau})=\lim_{\tau\to1^{-}}\frac{D(\eta_{1})-D(\eta_{\tau})}{1-\tau}=\int_{a}^{b}-\frac{1}{2}\frac{1}{\sqrt{\xi''}}(\eta''-\xi'').
\]
Here we used the monotone convergence theorem to pass to the limit,
which we may do since $\eta''-\xi''$ changes sign only finitely many
times.

Now we claim that 
\begin{equation}
\at{\frac{d}{d\tau}^{-}}_{\tau=1}D(\eta_{\tau})=\frac{1}{2}\int_{a}^{b}-\mathfrak{d}(\eta-\xi).\label{eq:IBPresult}
\end{equation}
We prove this by case analysis. Suppose first that $a>0$. This then
follows by an integration by parts along with \prettyref{eq:consistency}. 

Now suppose that $a=0$. We claim that when $\mathfrak{d}\leq0$ on
this interval, that \prettyref{eq:IBPresult} still holds by an integration
by parts. Indeed, we claim that under this assumption that $\xi''(0)>0$.
With this claim in hand, integration by parts and \prettyref{eq:consistency}
gives that
\[
\int_{\epsilon}^{b}\frac{1}{\sqrt{\xi''}}(\eta''-\xi'')=\int_{\epsilon}^{b}-\mathfrak{d}(\eta-\xi)-\frac{1}{\sqrt{\xi''}}(\eta'-\xi')(\epsilon)+(\frac{1}{\sqrt{\xi''}})'(\eta-\xi)(\epsilon)
\]
for all $\epsilon\in(0,b)$. Since $\xi''(0)>0$, $\frac{1}{\sqrt{\xi''}}\in L^{1}([0,b])$
and also $\frac{1}{\sqrt{\xi''}}\vee|(\frac{1}{\sqrt{\xi''}})'|\leq C$.
Since $\eta\in C^{2}([0,1))$ by \prettyref{prop:dual-regularity},
we can take $\epsilon\to0$ to deduce \prettyref{eq:IBPresult}.

We now prove the claim. Suppose, for contradiction, that $\xi''(0)=0$.
By direct manipulation, $\mathfrak{d}$ has the same sign as 
\[
s(t)=3(\xi''')^{2}-2\xi''\cdot\xi''''
\]
for $t\in(0,1)$. Let $k=\min\{p:\beta_{p}>0\}$ then 
\[
s(t)=\beta_{k}^{4}k^{3}(k-1)^{2}(k-2)t^{2k-6}+O(t^{2k-5}).
\]
This is positive in a neighborhood of zero, contradicting the assumption
that $\mathfrak{d}\leq0$.

To complete the proof, observe that the integrand in \prettyref{eq:non-negderivative}
is non-negative by our assumption on $\mathfrak{d}$ and since $\eta\geq\xi$.
Hence, \prettyref{eq:non-negderivative} follows and therefore $[a,b]\subset\{\eta=\xi\}$. 

\end{proof}

We conclude with the following remark.
\begin{rem}
\label{rem:We-are-awesome}This proof is an example of the observation
alluded to in the introduction that there are methods available to
the study of the dual problem that do not have apparent analogues
for the primal one. In particular, the variation used above defined through
\prettyref{eq:eta-tilde} does not obviously have an associated path
on the primal side, as it only becomes apparent that $\left(\frac{1}{\sqrt{\tilde{\eta}''}}\right)''$
is a probability measure \emph{a posteriori}!
\end{rem}

\section{Appendix}

To make this presentation self-contained we present an alternative
proof of the following statement.
\begin{lem}
\label{lem:mu-in-Q} The minimizer of \prettyref{eq:Crisanti-Sommers-var-prob-def}
belongs to $Q=\{\mu\in\Pr:\sup\text{supp}\,\mu<1\}$.\end{lem}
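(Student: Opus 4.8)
The plan is to show that any minimizer $\mu$ of $P_{\xi,h}$ cannot have mass accumulating at $1$; equivalently, $\phi_\mu(s) = 1-s$ on some interval $(1-\delta,1)$. I would argue by contradiction: suppose $\sup\operatorname{supp}\mu = 1$, so that $\phi_\mu(s) < 1-s$ for all $s<1$ (strictly, since $\mu([0,s]) < 1$ for $s<1$). The idea is to perturb $\mu$ near $1$ to strictly decrease $P_{\xi,h}$, contradicting optimality.

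First I would set up the comparison measure. Given small $\delta>0$, define $\mu_\delta$ by redistributing the mass that $\mu$ places on $(1-\delta,1]$: for instance, let $\mu_\delta$ agree with $\mu$ on $[0,1-\delta]$ and place the remaining mass $m = \mu((1-\delta,1])$ as an atom at $1-\delta$ (or more simply, push all of $\mu$'s mass off of a neighborhood of $1$). Then $\phi_{\mu_\delta}(s) = 1-s$ for $s \in (1-\delta,1)$, while $\phi_{\mu_\delta}(s) = \phi_\mu(s) + (\text{a correction supported on } [0,1-\delta])$ that is small and controlled by $m$. I would then compare the three terms of $P_{\xi,h}$: the term $\int_0^1 (\tfrac{1}{\phi_\mu} - \tfrac{1}{1-s})\,ds$ is where the gain comes from — on $(1-\delta,1)$ the integrand for $\mu_\delta$ vanishes, whereas for $\mu$ it is strictly positive; the terms $\tfrac12\int \xi'' \phi_\mu\,ds$ and $\tfrac12 h^2 \phi_\mu(0)$ change by at most $O(m)$ since $\phi$ changes by $O(m)$ pointwise and $\xi''\in L^1$.

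The main obstacle — and the key quantitative step — is to show the gain beats the loss: I need that $\int_{1-\delta}^1 \left(\tfrac{1}{\phi_\mu(s)} - \tfrac{1}{1-s}\right)ds$ is not $o(m)$ as $\delta\to 0$, i.e., that the blow-up of $1/\phi_\mu$ near $1$ is severe enough. Since $\phi_\mu(s) \le 1-s$ and $\mu$ has mass near $1$, one has $\phi_\mu(s) \le (1-s)(1 - c)$ on a suitable sequence of scales, giving $\tfrac{1}{\phi_\mu} - \tfrac1{1-s} \ge \tfrac{c}{(1-s)}\cdot\tfrac{1}{1-c}$, whose integral over $(1-\delta,1)$ diverges like $\log(1/\delta)$ — in particular it dominates any $O(m)$ loss once $\delta$ is small (choosing the redistributed mass $m$ to go to $0$ with $\delta$, e.g. $m = \mu((1-\delta,1])\to 0$ by dominated convergence). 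Making this dichotomy precise — either $\phi_\mu(s)/(1-s)$ is bounded away from $1$ on arbitrarily small scales, in which case the log-divergence kills optimality, or $\phi_\mu(s)/(1-s)\to 1$, in which case $\mu((1-\delta,1])\to 0$ fast enough that a cruder estimate closes it — is the heart of the argument. Once the contradiction is reached, we conclude $\sup\operatorname{supp}\mu < 1$, i.e., $\mu \in Q$.
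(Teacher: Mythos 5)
Your choice of perturbation --- truncate $\mu$ on $(1-\delta,1]$ and place the remaining mass as an atom at $1-\delta$ --- is exactly the one the paper uses, so the strategic setup is sound. The gap is in the quantitative closure, and it stems from tracking the wrong piece of the gain.

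You account only for the gain coming from the region $[1-\delta,1]$, where $\phi_{\mu_\delta}(s)=1-s$ makes the integrand $\tfrac{1}{\phi}-\tfrac{1}{1-s}$ vanish, and then you try to show that this gain is $\gtrsim m:=\mu((1-\delta,1])$ while the loss in the other two terms is $O(m)$. This sets up a comparison of two quantities that both tend to zero, and your dichotomy (``$\phi_\mu(s)/(1-s)$ stays bounded away from $1$ on a sequence of scales'' versus ``$\phi_\mu(s)/(1-s)\to 1$'') is not spelled out in a way that clearly covers both cases: a sequence of points with $\phi_\mu(s_k)\le(1-c)(1-s_k)$ does not by itself give a lower bound on the integral (the good set could have arbitrarily small measure), and in the second branch ``fast enough'' is not quantified. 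Meanwhile you \emph{underestimate} the loss control: since $\phi_{\mu_\delta}(s)-\phi_\mu(s)=\delta-\phi_\mu(1-\delta)=\int_{1-\delta}^1\mu((t,1])\,dt\le\delta m$ on $[0,1-\delta)$, the change in $\phi$ is $O(\delta m)$, not merely $O(m)$.

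More importantly, you ignore the gain on the bulk $[0,1-\delta)$, and that is where the argument actually closes without any dichotomy. Since $\phi$ increases by the constant $\kappa:=\delta-\phi_\mu(1-\delta)>0$ there, the decrease in $\int \tfrac{1}{\phi}$ over $[0,1-\delta)$ is $\kappa\int_0^{1-\delta}\tfrac{1}{\phi_{\mu_\delta}\phi_\mu}$, and the elementary bound $\phi_{\mu_\delta},\phi_\mu\le 1-s$ gives
\[
\int_0^{1-\delta}\frac{1}{\phi_{\mu_\delta}\phi_\mu}\ge\int_0^{1-\delta}\frac{1}{(1-s)^2}\,ds=\frac{1-\delta}{\delta},
\]
which diverges like $1/\delta$. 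The paper groups this gain with the $O(\kappa)$ losses into a single factored term $\kappa\left[\xi'(1-\delta)+h^2-\int_0^{1-\delta}\tfrac{1}{\phi_{\mu_\delta}\phi_\mu}\right]$, observes that $\kappa>0$ when $\sup\operatorname{supp}\mu=1$, and that the bracket is $\le\xi'(1)+h^2-\tfrac{1-\delta}{\delta}<0$ for small $\delta$. The contribution from $[1-\delta,1]$ is then a free extra negative term. You should replace the dichotomy with this bulk estimate; it is both simpler and unconditional.
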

\begin{proof}
It suffices to prove the following claim: given $\mu\in\Pr\backslash Q$,
there exists $\tilde{\mu}\in Q$ with $P(\tilde{\mu})<P(\mu)$. We
prove this by a first variation argument. Given $\mu\in\Pr$, consider
the variation $[0,1]\to\Pr$, $\epsilon\to\mu_{\epsilon}$ defined
by
\[
\mu_{\epsilon}([0,t])=\begin{cases}
\mu([0,t]) & 0\leq t<1-\epsilon\\
1 & 1-\epsilon\leq t\leq1
\end{cases}.
\]
Note that $\mu_{\epsilon}\in Q$ if $\epsilon>0$. Evidently
\[
\phi_{\epsilon}=\phi_{\mu_{\epsilon}}=\begin{cases}
\phi_{\mu}(t)-\phi_{\mu}(1-\epsilon)+\epsilon & 0\leq t<1-\epsilon\\
1-t & 1-\epsilon\leq t\leq1
\end{cases},
\]
so that 
\[
\phi_{\epsilon}-\phi=\begin{cases}
\epsilon-\phi_{\mu}(1-\epsilon) & 0\leq t<1-\epsilon\\
1-t-\phi_{\mu}(t) & 1-\epsilon\leq t\leq1
\end{cases}.
\]
Hence,
\begin{align*}
P(\mu_{\epsilon})-P(\mu) & =\int_{0}^{1}(\xi''-\frac{1}{\phi_{\epsilon}\phi})(\phi_{\epsilon}-\phi)\,ds+h^{2}(\phi_{\epsilon}(0)-\phi(0))\\
 & =(\epsilon-\phi_{\mu}(1-\epsilon))\left[\xi'(1-\epsilon)+h^{2}-\int_{0}^{1-\epsilon}\frac{1}{\phi_{\epsilon}\phi}\right]+\int_{1-\epsilon}^{1}(\xi''-\frac{1}{\phi_{\epsilon}\phi})(1-t-\phi)\,ds\\
 & =(i)+(ii).
\end{align*}

Now suppose that $\mu\in\Pr\backslash Q$, so that $\phi_{\mu}(1-\epsilon)<\epsilon$
for all $\epsilon>0$. Using the upper bound $\phi_{\epsilon}\vee\phi\leq1-t$
and that $\xi'$ is non-decreasing, we see that
\[
\xi'(1-\epsilon)+h^{2}-\int_{0}^{1-\epsilon}\frac{1}{\phi_{\epsilon}\phi}\leq\xi'(1)+h^{2}-\int_{0}^{1-\epsilon}\frac{1}{(1-t)^{2}}=\xi'(1)+h^{2}-\frac{1-\epsilon}{\epsilon}.
\]
The righthand side is strictly negative for small enough $\epsilon$,
so $(i)$ is strictly negative for small enough $\epsilon$. Similarly,
we see that $(ii)$ is strictly negative for sufficiently small $\epsilon>0$.
It follows that $P(\mu_{\epsilon})-P(\mu)<0$ for small enough $\epsilon$.
\end{proof}
\bibliographystyle{plain}
\bibliography{finitebeta}

\end{document}